\DeclareMathOperator{\st}{\,|\,}
\DeclareMathOperator{\N}{\mathbb{N}}
\DeclareMathOperator{\depth}{depth}
\DeclareMathOperator{\lk}{lk}
\newcommand{\squigs}[1]{\left\{#1\right\}}
\newtheorem{theorem}{Theorem}
\numberwithin{theorem}{section}
\newtheorem{proposition}[theorem]{Proposition}
\newtheorem{corollary}[theorem]{Corollary}
\newtheorem{conjecture}[theorem]{Conjecture}
\theoremstyle{definition}
\newtheorem{definition}[theorem]{Definition}
\newtheorem{example}[theorem]{Example}
\newtheorem{remark}[theorem]{Remark}
\newtheorem{question}[theorem]{Question}
\newcommand\ideal[1]{\left< #1 \right>}
\newcommand\set[1]{\{ #1 \}}
\newcommand\abs[1]{\left| #1 \right|}
\definecolor{orange}{rgb}{0.898, 0.621, 0.0}
\definecolor{skyblue}{rgb}{0.336, 0.703, 0.910}
\definecolor{bluishgreen}{rgb}{0, 0.617, 0.449}
\definecolor{yellow}{rgb}{0.937, 0.890, 0.258}
\definecolor{blue}{rgb}{0, 0.445, 0.695}
\definecolor{red}{rgb}{0.832, 0.367, 0}
\definecolor{purple}{rgb}{0.797, 0.473, 0.652}
\newcommand{\defword}[1]{\emph{#1}}
\newcommand{\drawsimplex}[4][]{

\ifthenelse{\equal{#1}{}}{\def\c{black}}{\def\c{#1}}

\foreach \i in #4 \foreach \j in #4 \foreach \k in #4
    \draw [pattern=#2, pattern color=#3] (\i) -- (\j) -- (\k);
\foreach \i in #4 \foreach \j in #4
    \draw [line width=1.5pt, draw=\c, draw opacity=1] (\i) -- (\j);
}
\title{Partition and Cohen--Macaulay extenders}
\author{Joseph Doolittle}
\email{jdoolittle@tugraz.at}
\address{Institute of Geometry, TU Graz, Austria}
\author{Bennet Goeckner} 
\email{goeckner@uw.edu}
\address{Department of Mathematics, University of Washington}
\author{Alexander Lazar}
\email{alelaz@kth.se}
\address{Department of Mathematics, KTH Royal Institute of Technology, Sweden}
\begin{document}
\begin{abstract}
If a pure simplicial complex is partitionable, then its $h$-vector has a combinatorial interpretation in terms of any partitioning of the complex. Given a non-partitionable complex $\Delta$, we construct a complex $\Gamma \supseteq \Delta$ of the same dimension such that both $\Gamma$ and the relative complex $(\Gamma,\Delta)$ are partitionable. This allows us to rewrite the $h$-vector of any pure simplicial complex as the difference of two $h$-vectors of partitionable complexes, giving an analogous interpretation of the $h$-vector of a non-partitionable complex.

By contrast, for a given complex $\Delta$ it is not always possible to find a complex $\Gamma$ such that both $\Gamma$ and $(\Gamma,\Delta)$ are Cohen--Macaulay. We characterize when this is possible, and we show that the construction of such a $\Gamma$ in this case is remarkably straightforward. We end with a note on a similar notion for shellability and a connection to Simon's conjecture on extendable shellability for uniform matroids.
\end{abstract}

\maketitle

\section{Introduction}
The $h$-vector of a simplicial complex contains important and well-studied information about the complex and its associated Stanley--Reisner ring.
If a pure complex is \emph{partitionable}, then the entries of its $h$-vector are non-negative and have a combinatorial interpretation in terms of the partitioning of the face poset. In general, the $h$-vector can be described algebraically in terms of the Stanley--Reisner ring of $\Delta$, but the aforementioned combinatorial interpretation for the \(h\)-vector of a partitionable complex does not apply to non-partitionable complexes.

We introduce a new object of study, which we will use to extend the combinatorial interpretation for the \(h\)-vector.

\begin{definition}
Let $\Delta$ be a pure $d$-dimensional simplicial complex. A pure $d$-dimensional complex $\Gamma$ is a \defword{partition extender} for $\Delta$ if
\begin{itemize}
\item $\Delta \subseteq \Gamma$.
\item $\Gamma$ is partitionable.
\item The relative complex $(\Gamma, \Delta)$ is partitionable.
\end{itemize}
\end{definition}

\begin{theorem}[Theorem~\ref{main}]
Every pure simplicial complex has a partition extender.
\end{theorem}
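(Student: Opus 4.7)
My plan is to attach to each facet of $\Delta$ the boundary of a $(d+1)$-simplex. For each facet $F$ of $\Delta$ introduce a fresh vertex $u_F$ (distinct across facets), let $S_F := \partial(F\cup\{u_F\})$, and set $\Gamma := \Delta \cup \bigcup_F S_F$. The facets of $\Gamma$ are those of $\Delta$ together with, for each facet $F$ and each $v\in F$, the new facet $F_v := (F\setminus\{v\})\cup\{u_F\}$. Because the $u_F$ are pairwise distinct and new, the faces of $\Gamma\setminus\Delta$ are exactly those containing some $u_F$, and these are disjoint across different $F$.

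Producing a partitioning of $(\Gamma,\Delta)$ is the easier step. For each facet $F$ fix an ordering $v_0,\dots,v_d$ of its vertices. Shelling $S_F$ as $F, F_{v_0}, \dots, F_{v_d}$ yields restriction sets $\emptyset, \{u_F\}, \{u_F,v_0\},\dots, \{u_F,v_0,\dots,v_{d-1}\}$; dropping the first interval (attached to $F$ itself), the remaining ones all have restriction containing $u_F$, so they lie entirely in $\Gamma\setminus\Delta$, and together they partition the faces of $S_F$ containing $u_F$. Ranging over all facets of $\Delta$, this partitions all of $\Gamma\setminus\Delta$.

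The main step is to partition $\Gamma$ itself. The crucial flexibility is that a new facet $F_v$ can be used with an interval $[A,F_v]$ whose restriction $A$ avoids $u_F$: for $A\subseteq F\setminus\{v\}$, such an interval splits as $[A, F\setminus\{v\}]\cup [A\cup\{u_F\}, F_v]$, covering $2^{d-|A|}$ faces of $\Delta$ alongside $2^{d-|A|}$ faces of $\Gamma\setminus\Delta$. This lets the added facets absorb portions of $\Delta$ that $\Delta$'s own facets cannot cover on their own. I would single out one star $S_{F^*}$ and use $[\emptyset, F^{*}_{v^{*}}]$ for some $v^*\in F^*$ as the unique interval with empty restriction; every other facet of $\Gamma$ then receives an interval with nonempty restriction, using the splitting above to mop up the remaining faces of $\Delta$.

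The main obstacle is showing that such a consistent global assignment exists for an arbitrary pure $\Delta$: the choices of restriction set for added facets attached to different facets of $\Delta$ must agree on the lower-dimensional faces they share, so the argument will likely proceed by a careful inductive construction that processes the facets of $\Delta$ one at a time, exploiting at each step that the $d+1$ added facets per facet of $\Delta$ provide more than enough slack to absorb any deficiency inherited from $\Delta$'s non-partitionability.
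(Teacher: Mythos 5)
Your partitioning of $(\Gamma,\Delta)$ is correct, and the observation that the new facets $F_v$ can be used with restriction sets avoiding $u_F$ to absorb pieces of $\Delta$ is a genuine and interesting idea. However, the construction does \emph{not} work in general, and the step you describe as ``the main obstacle'' is not merely unproved --- it is false. The added structure, one boundary of a $(d+1)$-simplex per facet, simply does not provide enough slack.

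A concrete counterexample: take $\Delta$ to be two disjoint $2$-simplices, $\Delta=\langle 123,456\rangle$. Your $\Gamma$ is then the disjoint union of $\partial\langle 1,2,3,u_1\rangle$ and $\partial\langle 4,5,6,u_2\rangle$, i.e.\ two disjoint boundaries of tetrahedra. This complex has $f(\Gamma)=(1,8,12,8)$ and $h(\Gamma)=(1,5,-1,3)$. Since partitionable complexes have nonnegative $h$-vectors, $\Gamma$ is not partitionable. In general, $h(\Gamma)=h(\Delta)+f_d(\Delta)\cdot(0,1,1,\dots,1)$, so your $\Gamma$ cannot be partitionable whenever some $h_i(\Delta)<-f_d(\Delta)$; two disjoint $d$-simplices ($d\geq 2$) have $h_2=-\binom{d+1}{2}<-2=-f_d$, so this failure is typical for disconnected complexes, which are already the most elementary nonpartitionable examples.

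The underlying issue is that each new star contributes only $+1$ to each $h_i$ for $i\geq 1$, while the deficiency in $h(\Delta)$ can be arbitrarily larger than the facet count. The paper avoids this by attaching a separate, specially designed gadget --- a $(d,k)$-partition extender --- to \emph{every} face $\sigma$ of $\Delta$, not just to facets. Each gadget is built (inductively over $k$) so that it admits two partitionings: one that covers the attached face $\sigma$ and one that does not, and the gadgets meet $\Delta$ only along $\sigma$ and a facet containing it. The two required partitionings of $\Gamma$ and $(\Gamma,\Delta)$ are then produced by choosing, independently for each gadget, one of its two local partitionings; there is no global consistency problem to solve. To salvage your approach you would, at minimum, need to attach much more structure (and likely in a face-by-face rather than facet-by-facet manner), at which point you are essentially reconstructing the paper's argument.
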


\pagebreak
For any relative complex \((\Gamma, \Delta)\) with $\dim \Gamma = \dim \Delta$ we can write 
$$
h(\Delta) = h(\Gamma) - h(\Gamma,\Delta).
$$
When \(\Gamma\) is a partition extender for \(\Delta\), then both of the right-hand \(h\)-vectors have combinatorial interpretations. This allows us to view the $h$-vector of $\Delta$ as an ``error term'' between the $h$-vector of $\Gamma$ and the $h$-vector of $(\Delta,\Gamma)$. Specifically, every $h$-vector of a simplicial complex is the difference between the $h$-vector of a partitionable complex and the \(h\)-vector of a partitionable relative complex.

Our construction of a partition extender can be generalized to nonpure complexes. In the nonpure case, partitionability is a more subtle condition than in the pure case (see \cite{Hachimori_sequential}). However, we show that our construction satisfies strong enough properties to yield a combinatorial interpretation of the $h$-triangle of an arbitrary nonpure complex. 

We further show that if $\depth \Bbbk[\Delta] = \dim \Bbbk[\Delta] - 1$, then for any Cohen--Macaulay complex \(\Gamma\) of the same dimension that contains $\Delta$, the relative complex $(\Gamma,\Delta)$ is Cohen--Macaulay. This similarly allows us to write the $h$-vector of any such complex as the difference between the $h$-vector of a Cohen--Macaulay  complex and the \(h\)-vector of a relative Cohen--Macaulay complex. We also show that such a $\Gamma$ does not exist if the depth of $\Bbbk[\Delta]$ is any lower.

While an equivalent notion for shellability is straightforward to define, it is unclear when shellable extenders exist. They certainly cannot exist whenever $\depth \Bbbk[\Delta] < \dim \Bbbk[\Delta] - 1$, since relative shellability implies relative Cohen--Macaulayness. We conclude with a connection to Simon's conjecture on shellability of uniform matroids \cite[Conjecture 4.2.1]{Simon_Cleanness}.

In Section~\ref{sec_background}, we review standard definitions and background material. In Section~\ref{sec_constructions}, we give explicit constructions which have the required properties to make our proofs work. In Section~\ref{sec_partitions}, we provide our main result on partition extenders for pure complexes. In Section~\ref{sec_nonpure_partition}, we consider the case of nonpure partitionability. In Section~\ref{sec_CM}, we prove parallel results with the Cohen--Macaulay property in place of partitionable. In Section~\ref{sec_shell}, we survey the current state of the problem with the shellable property. In Section~\ref{sec_closing}, we discuss possible future directions of investigation.

\section{Preliminaries}\label{sec_background}
A \defword{simplicial complex} $\Delta$ is a collection of sets such that if $\sigma \in \Delta$ and $\tau\subseteq \sigma$, then $\tau \in \Delta$. The elements of $\Delta$ are called \defword{faces} of $\Delta$, and maximal faces are called \defword{facets}. If $\sigma$ is a face of $\Delta$, the \defword{dimension} of $\sigma$ is $\text{dim}(\sigma) := |\sigma| - 1$. The \defword{dimension} of $\Delta$ is defined to be the maximum of the dimensions of the faces of $\Delta$. We say that $\Delta$ is \defword{pure} if all its facets have the same dimension.
Let $\Delta$ be a $d$-dimensional simplicial complex. The \defword{f-vector} of $\Delta$ is the vector
$$f(\Delta) = (f_{-1}(\Delta),f_0(\Delta),f_1(\Delta),\dots,f_{d}(\Delta)),$$ 
where $f_i(\Delta)$ is the number of $i$-dimensional faces of $\Delta$. Note that $f_{-1}(\Delta) = 1$ unless $\Delta$ is the \defword{empty complex} $\Delta = \varnothing$. 

The \defword{h-vector} of $\Delta$ is the vector $h(\Delta) = (h_0(\Delta),h_1(\Delta),\dots, h_{d+1}(\Delta))$ , whose entries are defined by the relation
\begin{equation}\label{def:h}
\sum_{i=0}^{d+1}f_{i-1}(\Delta)(x-1)^{d-i+1} = \sum_{i=0}^{d+1}h_{i}(\Delta)x^{d-i+1}.
\end{equation}
The \defword{face poset} $P(\Delta)$ of a simplicial complex $\Delta$ is the set of all faces of $\Delta$, partially-ordered by inclusion. An \defword{interval} $I$ in a poset $P$, denoted $I = [\sigma,\tau]$, is the set of elements $e$ of $P$ such that $\sigma \leq e \leq \tau$. When this interval $I$ is itself a Boolean poset (i.e., $I \cong 2^{[k]}$ for some $k \in \mathbb{Z}_{\ge 0}$), we say it is a \defword{Boolean interval}.

Let $\Gamma$ be a simplicial complex and $\Delta$ be a subcomplex of $\Gamma$. The \defword{relative complex} $(\Gamma,\Delta)$ consists of the faces of $\Gamma$ not contained in $\Delta$. A relative complex is \emph{pure} if all its maximal faces have the same dimension. If $(\Gamma,\Delta)$ is a relative complex, we can define $f(\Gamma,\Delta) = (f_{-1}(\Gamma,\Delta),\dots,f_d(\Gamma,\Delta))$ by $f_j(\Gamma,\Delta) = f_j(\Gamma) - f_j(\Delta)$ for all $j$. We can further define $h(\Gamma,\Delta)$ via \eqref{def:h} above.

A poset $P$ is said to be \defword{partitionable} if $P$ can be written as a disjoint union of intervals $I_1 \sqcup \cdots \sqcup I_k$ such that each $I_j$ is a Boolean interval and the maximum element of each $I_j$ is a maximal element of $P$. A (relative) complex is said to be \defword{partitionable} if its face poset is partitionable.

\begin{proposition}\label{prop:h_vec} \textrm{\cite[Page 118]{Green_Stanley}}
If a pure relative complex is partitionable, then $h_i(\Gamma,\Delta)$ is the number of Boolean intervals in any partitioning of the face poset of $(\Gamma,\Delta)$ whose minimal element is an $(i-1)$-dimensional face of $(\Gamma,\Delta)$.
\end{proposition}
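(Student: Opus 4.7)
The plan is to reduce the claim to a straightforward generating-function identity by computing the $f$-vector of $(\Gamma,\Delta)$ in terms of any fixed partitioning, and then comparing with the defining relation \eqref{def:h} for $h(\Gamma,\Delta)$.

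First I would fix a partitioning $P(\Gamma,\Delta) = I_1 \sqcup \cdots \sqcup I_r$, where each $I_j = [\sigma_j,\tau_j]$ is a Boolean interval whose top $\tau_j$ is a facet of $(\Gamma,\Delta)$. Since the relative complex is pure of dimension $d$, every $\tau_j$ has size $d+1$. Let $a_i$ denote the number of intervals whose minimum $\sigma_j$ has size $i$ (equivalently, dimension $i-1$); this is the quantity the proposition claims equals $h_i(\Gamma,\Delta)$. Because the intervals partition $P(\Gamma,\Delta)$ and an interval $[\sigma_j,\tau_j]$ with $|\sigma_j|=i$ contains exactly $\binom{d+1-i}{k-i}$ faces of size $k$, I get the identity
$$
f_{k-1}(\Gamma,\Delta) \;=\; \sum_{i=0}^{d+1} a_i \binom{d+1-i}{k-i}.
$$

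Next I would substitute this formula into the left-hand side of \eqref{def:h} and swap the order of summation:
$$
\sum_{k=0}^{d+1} f_{k-1}(\Gamma,\Delta)(x-1)^{d-k+1} \;=\; \sum_{i=0}^{d+1} a_i \sum_{k=i}^{d+1} \binom{d+1-i}{k-i}(x-1)^{d+1-k}.
$$
After reindexing the inner sum with $j=k-i$, the binomial theorem collapses it to $(1+(x-1))^{d+1-i} = x^{d+1-i}$, yielding
$$
\sum_{k=0}^{d+1} f_{k-1}(\Gamma,\Delta)(x-1)^{d-k+1} \;=\; \sum_{i=0}^{d+1} a_i x^{d+1-i}.
$$
Comparing coefficients with the right-hand side of \eqref{def:h} gives $a_i = h_i(\Gamma,\Delta)$ for every $i$, which is exactly the claim.

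The argument is essentially bookkeeping, so I do not expect any serious obstacle; the only point requiring care is to verify that the relative definitions (faces of $\Gamma$ not in $\Delta$, and $f_j(\Gamma,\Delta) = f_j(\Gamma) - f_j(\Delta)$) interact correctly with the interval count, which follows because a Boolean partitioning of $P(\Gamma,\Delta)$ partitions precisely the set of faces being counted by $f(\Gamma,\Delta)$. Purity is used only to guarantee that every $\tau_j$ has the same size $d+1$, so that the binomial coefficient $\binom{d+1-i}{k-i}$ is uniform across intervals; without purity the collapse via the binomial theorem would not yield a single polynomial in the correct degree.
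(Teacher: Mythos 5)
Your argument is correct and is the standard proof of this fact; the paper itself does not prove the proposition but cites it to Stanley's book, where essentially the same computation appears. In particular, your key identity $f_{k-1}(\Gamma,\Delta) = \sum_i a_i \binom{d+1-i}{k-i}$, the swap of summation, and the binomial-theorem collapse to $x^{d+1-i}$ are exactly the textbook derivation, and your remark about where purity enters (uniform top size $d+1$) is the right thing to flag.
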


We note that for any simplicial complex $\Gamma$ that $(\Gamma,\varnothing) = \Gamma$, so Proposition~\ref{prop:h_vec} holds for simplicial complexes as well. There is no previously known combinatorial interpretation of the $h$-vectors for non-partitionable complexes.

The notation $[n]$ indicates the set of integers $\{1,2,\ldots,n\}$. We take as a convention that $[0] = \varnothing$. Throughout the rest of this paper, we assume that all simplicial complexes are collections of subsets of $[n]$.

If $\sigma$ is a face of $\Delta$, the \defword{link} of $\sigma$ in $\Delta$ is the simplicial complex
$$
\lk_{\Delta}(\sigma) = \{\tau \in \Delta \st \sigma \cup \tau \in \Delta, \; \sigma \cap \tau = \varnothing\}.
$$
A simplicial complex $\Delta$ is said to be \defword{Cohen--Macaulay} (over $\Bbbk$) if, for all faces $\sigma \in \Delta$,
$$\tilde{H}_i(\lk_{\Delta}(\sigma),\Bbbk) = \begin{cases}\Bbbk^{\beta_\sigma}, & i = \dim(\Delta) - \dim(\sigma) - 1 \\ 0, & \text{otherwise}\end{cases}$$
where {$\tilde{H}_i(X,\Bbbk)$ is the  $i^{th}$ reduced homology group of $X$ with coefficients in $\Bbbk$ and} $\beta_\sigma \in \N$ is the top Betti number of the link. 
By a result of Reisner \cite{Reisner}, this definition is equivalent to $\Bbbk[\Delta]$ being Cohen--Macaulay, i.e., that $\depth \Bbbk[\Delta] = \dim \Bbbk[\Delta].$  Here $\Bbbk[\Delta]$ is the \defword{Stanley--Reisner ring} (or \defword{face ring}) of $\Delta$. For a complex $\Delta$ on $n$ vertices $\Bbbk[\Delta] \coloneqq \Bbbk[x_1,\dots,x_n] / I_\Delta$ where  $I_\Delta$ is the monomial ideal generated by non-faces of $\Delta$.

Given a face $\sigma \in \Delta$, we  distinguish between the face $\sigma$ and the complex $\langle \sigma \rangle$ whose only facet is $\sigma$. If $\dim \sigma = d$, we call this latter object a \defword{$d$-simplex}.
\section{Intermediate Constructions}\label{sec_constructions}

Our main goal is to write the $h$-vector of any pure complex as the difference of $h$-vectors of two partitionable (relative) complexes. We will prove that this is always possible in Section~\ref{Section::Main}. In this section we introduce two intermediate constructions.

\begin{definition}\label{int_def}
A $(d,k)$\emph{-partition extender} is a pure $d$-dimensional simplicial complex $\Delta$ with a specified facet $F$ and a \(k\)-dimensional face $\sigma$ in $F$ such that both $(\Delta,\ideal{F})$ and $(\Delta,\ideal{F})\cup \squigs{\sigma}$ are partitionable. 
\end{definition}

\begin{remark}
It is not true that the object $(\Delta,\ideal{F})\cup \squigs{\sigma}$ in Definition~\ref{int_def} is a relative complex in general, but we can still determine whether its face poset is partitionable or not.
\end{remark}

\begin{example}
An example of a \((1,0)\)-partition extender is \(\Delta = \langle 12,23,34,24 \rangle\) with \(F=12\) and \(\sigma=2\). The face poset of \((\Delta,\ideal{F})\) is pictured below. A partitioning of this poset is given by the intervals $[23,23]$, $[3,34]$, and $[4,24]$.

\begin{center}
\begin{tikzpicture}
\node[circle,draw=black, fill=white, inner sep=1pt,minimum size=5pt] at (-1,1) (23){$23$};
\node[circle,draw=black, fill=white, inner sep=1pt,minimum size=5pt] at (0,1) (34){$34$};
\node[circle,draw=black, fill=white, inner sep=1pt,minimum size=5pt] at (1,1) (24){$24$};
\node[circle,draw=black, fill=white, inner sep=1pt,minimum size=5pt] at (-1,0) (3){$3$};
\node[circle,draw=black, fill=white, inner sep=1pt,minimum size=5pt] at (1,0) (4){$4$};
\draw (23) -- (3) -- (34) -- (4) -- (24);
\end{tikzpicture}
\end{center}

The poset of $(\Delta,\ideal{F}) \cup \squigs{\sigma}$, which has a partitioning into the intervals \([2,23]\), \([3,34]\), and \([4,24]\), is shown below.

\begin{center}
\begin{tikzpicture}
\node[circle,draw=black, fill=white, inner sep=1pt,minimum size=5pt] at (-1,1) (23){$23$};
\node[circle,draw=black, fill=white, inner sep=1pt,minimum size=5pt] at (0,1) (34){$34$};
\node[circle,draw=black, fill=white, inner sep=1pt,minimum size=5pt] at (1,1) (24){$24$};
\node[circle,draw=black, fill=white, inner sep=1pt,minimum size=5pt] at (0,0) (2){$2$};
\node[circle,draw=black, fill=white, inner sep=1pt,minimum size=5pt] at (-1,0) (3){$3$};
\node[circle,draw=black, fill=white, inner sep=1pt,minimum size=5pt] at (1,0) (4){$4$};
\draw (23) -- (3) -- (34) -- (4) -- (24) -- (2) -- (23);
\end{tikzpicture}
\end{center}
\end{example}

\begin{definition}
A $(d,k)$\emph{-prepartition extender} is a pure $d$-dimensional simplicial complex $\Delta$ with a specified facet $F$, and a face $\sigma$ in $F$ of dimension $k$  such that $(\Delta,\ideal{F}) \cup \squigs{\sigma}$ is partitionable.
\end{definition}

This differs from a \((d,k)\)-partition extender in that we do not require \((\Delta,\ideal{F})\) to be partitionable.

Note that $\sigma$ is in $F$, so there are no elements below it in $(\Delta,\ideal{F}) \cup \squigs{\sigma}$. Therefore in any partitioning of the poset $(\Delta,\ideal{F}) \cup \squigs{\sigma}$, $\sigma$ must be a bottom element of some interval in the partitioning.

\begin{proposition}\label{prop:PrePartExt}
For all \(-1 \leq k \leq d\), there exists a $(d,k)$-prepartition extender.
\end{proposition}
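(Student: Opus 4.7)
My plan is to exhibit an explicit $(d,k)$-prepartition extender via the simplest possible construction: take $F = [d+1]$, let $\Delta = \langle F \rangle$ be the single $d$-simplex on $F$, and pick any $k$-dimensional face $\sigma \subseteq F$. Pureness of $\Delta$ is immediate since $F$ is its unique (and top-dimensional) facet, and $\sigma$ is a face of $F$ of dimension $k$ by choice, so the only substantive thing to check is that $(\Delta, \langle F\rangle) \cup \{\sigma\}$ is partitionable.

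Every face of $\Delta$ is a face of $F$, so the relative complex $(\Delta, \langle F\rangle)$ has no faces. Hence the poset $(\Delta, \langle F\rangle) \cup \{\sigma\}$ consists of the single element $\sigma$, which is partitioned by the rank-zero Boolean interval $[\sigma, \sigma]$; its maximum $\sigma$ is trivially a maximal element of the poset, so the partitioning condition is met. I do not anticipate any serious obstacle, since the argument is essentially tautological. A more substantive construction, which would be useful as a building block in later sections of the paper (for instance $\Delta = \langle F\rangle \cup \partial\langle F''\rangle$ with $F'' = \sigma \cup T$ for a set $T$ of $d+1-k$ new vertices disjoint from $F$), produces prepartition extenders with many extra facets, and its main difficulty would be giving an explicit partition in the face poset: the case $d-k=1$ matches the paper's $(1,0)$-example and can be handled by shelling $\partial\langle F''\rangle$ with $F$ placed last, while $d-k\ge 2$ requires more care with bottom-singleton counts. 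This richer construction is not needed for the existence statement here.
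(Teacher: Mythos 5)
Your single-simplex example does satisfy the letter of Definition~3.4: for $\Delta=\langle F\rangle$ the poset $(\Delta,\langle F\rangle)\cup\{\sigma\}$ is the single point $\sigma$, partitionability only requires interval tops to be maximal elements of \emph{that} poset, and $[\sigma,\sigma]$ is such an interval. In that narrow sense the bare existence claim is verified (and for $k=d$ your example coincides with the paper's own construction). But for $k<d$ this is a degenerate reading that proves a vacuous statement and bypasses the content of the proposition. The paper's proof instead glues two $d$-simplices $D_1,D_2$ along the $k$-face $\sigma$, adds the facets $W_{1,j}\cup W_{2,i}$, and exhibits an explicit partitioning $I=[\sigma,D_1]$, $I'=[\varnothing,D_2]$, $I_{i,j}$. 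The extra work buys properties that the definition does not record but that the construction is required to have downstream: every interval in the exhibited partitioning is topped by a $d$-dimensional facet, and the interval containing $\sigma$ is topped by a facet $D_1\supsetneq\sigma$. These are exactly the features used in Proposition~\ref{dk_part} (where one recurses on the faces $\tau$ with $\sigma\subsetneq\tau\subseteq\tilde F$) and in Theorem~\ref{main} (where the partitionings are glued into a larger complex $\Gamma$ and their interval tops must be facets of $\Gamma$). With your $\Delta=\langle F\rangle$ the interval containing $\sigma$ is $[\sigma,\sigma]$, the recursion in Proposition~\ref{dk_part} attaches nothing, and the assembled ``partitioning'' of $\Gamma$ in Theorem~\ref{main} would consist of singleton intervals $[\sigma,\sigma]$ whose tops are not facets of $\Gamma$, so the machinery this proposition is meant to supply collapses.

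Moreover, the ``more substantive construction'' you sketch ($\langle F\rangle\cup\partial\langle F''\rangle$) is not the paper's construction and is left unproven precisely where the difficulty lies: exhibiting the partitioning of $(\Delta,\langle F\rangle)\cup\{\sigma\}$ when $d-k\ge 2$, which is the bulk of the paper's argument (the careful bookkeeping showing each face lies in exactly one of $I$, $I'$, $I_{i,j}$). So as it stands your proposal either proves only a trivialized version of the statement or defers the real work; to be usable you should prove the stronger assertion implicit in the paper, namely the existence of a $(d,k)$-prepartition extender together with a partitioning all of whose intervals are topped by facets, with $\sigma$'s interval topped by a facet strictly containing $\sigma$ when $k<d$.
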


\begin{proof}
We prove this proposition by directly constructing a \((d,k)\)-prepartition extender for arbitrary \(k\) and \(d\).
Consider two $d$-simplices, $D_1$ and $D_2$ such that \(D_1 \cap D_2 = \sigma\), where \(\sigma\) is a \(k\)-face.
Label the vertices of $D_1$ not in $\sigma$ as $\{1,\ldots,d-k\}$, the vertices of $D_2$ not in $\sigma$ as $\{d-k+1,\ldots,2d-2k\}$, and the vertices of $\sigma$ as $\{2d-2k+1,\ldots,2d-k+1\}$.

Define $W_{1,j} = \{j+1, \ldots, j+d-k+1\}$ for all \(j\) such that $0 \leq j \leq d-k-1$, and $W_{2,i}= \sigma \setminus i$ for all $i$ in $\sigma$. Let \(\Delta\) be the simplicial complex on $2d-k+1$ vertices whose facets are $D_1$, $D_2$, and all sets of the form $W_{1,j} \cup W_{2,i}$. We emphasize that when \(d=k\), there are no valid choices for \(j\), and so \(\Delta\) is the complex on \(d+1\) vertices whose facets are \(D_1\) and \(D_2\), which are in fact the same facet. We also emphasize that when \(k=-1\), there are no valid choices for \(i\), so \(\Delta\) is the complex on \(2d+2\) vertices whose facets are \(D_1\) and \(D_2\). For all other choices of \(d\) and \(k\), we see that \(|W_{1,j}| =d-k+1 \) and \(|W_{2,i}|=k \). Therefore, in all cases \(\Delta\) is a pure simplicial complex of dimension \(d\). 

The following is a set of Boolean intervals in the face poset of \(\Delta\).
\begin{align*}
I=&[\sigma,D_1]\\
I'=&[\varnothing,D_2]\\
I_{i,j}=&[\{j+1\}\cup \{v \in \sigma : v<i\},W_{1,j}\cup W_{2,i}] \text{ for } i \in \sigma \text{ and } 0 \leq j \leq d-k-1.
\end{align*}

We claim that every face of \(\Delta\) is in exactly one of these intervals, except for the face $\sigma$ which is in both $I$ and $I'$.

Note that $I \cap I' = \sigma$. Furthermore, $I$ is disjoint from each $I_{i,j}$, since every face in $I$ contains $\sigma$, and no face of $I_{i,j}$ contains $\sigma$. Likewise, $I'$ is disjoint from each $I_{i,j}$, since $j+1$ is a vertex of $D_1$ that is not contained in $\sigma$, and therefore not contained in $D_2$.

Consider some face $\tau$ not in $I$ or $I'$, that is, $\tau$ is not contained in $D_2$ and $\tau$ does not contain $\sigma$. Let $j+1$ be the least vertex of $\tau$. Since $\tau$ is not in $D_2$, this means that $j+1$ is in $[d-k]$, and so $0 \leq j \leq d-k-1$. Since \(\tau \subseteq W_{1,j'} \cup W_{2,i'}\) for some \(j',i'\), the difference between the largest index in \(\tau\) not in \(\sigma\) and \(j'+1\) is at most \(d-k\). Therefore, \(\tau \cap [2d-2k] \subseteq W_{1,j}\). Let $i$ be the largest vertex of $\sigma$ such that all smaller labeled vertices of $\sigma$ are in $\tau$. This implies that $i$ is the smallest vertex of \(\sigma\) not in $\tau$. Since $\tau \nsupseteq \sigma$, there is some vertex of $\sigma$ not in $\tau$, and therefore this $i$ exists. This shows that \(\tau \cap \sigma \subseteq W_{2,i}\). We conclude that $\tau$ is in the interval $I_{i,j}$.

Furthermore, we will show that $\tau$ is not in any other interval. By assumption, $\tau$ is not in $I$ or $I'$.

Let \(I_{i',j'}\) be an interval which contains \(\tau\). Since \(\tau\) contains all vertices of \(\sigma\) less than \(i\), and \(W_{2,i'}\) does not contain \(i'\), then \(i'\) cannot be less than \(i\), since that would imply that \(\tau\) both does and does not contain \(i'\). Likewise, $i'$ cannot be greater than $i$, since every face in $I_{i',j'}$ contains the vertices of $\sigma$ less than \(i'\), and $\tau$ does not contain $i$, which is one of those vertices. Therefore $i'=i$.

Furthermore, we see that $j'$ cannot be greater than $j$, since otherwise $W_{1,j'}$ does not contain $j+1$, and $\tau$ does contain $j+1$. Similarly, $j'$ cannot be less than $j$, because every face in $I_{i,j'}$ contains $j'+1$, but $j+1$ was the smallest vertex that $\tau$ contained. Therefore $j'=j$.

Therefore the only interval that contains $\tau$ is $I_{i,j}$.

This means that \(\Delta\) is a $(d,k)$-prepartition extender, with $D_2$ as the specified facet, $\sigma$ as the specified face, and the set $\{I\} \cup \bigcup_{i,j} \{I_{i,j}\}$ as a partition of $(\Delta,\ideal{D_2}) \cup \squigs{\sigma}$.
\end{proof}

\begin{example} We describe the facets of \((d,k)\)-prepartition extenders given in Proposition~\ref{prop:PrePartExt} for $d-2 \le k \le d$.

A $(d,d)$-prepartition extender is a $d$-simplex. 

A $(d,d-1)$-prepartition extender has the following set of facets:
\begin{align*}
D_1 &= \{1,3,4,\dots,d+2\}\\
D_2 &= \{2,3,\dots,d+2\}\\
W_{1,0}\cup W_{2,i} &= \{1,2,\dots,\hat{i},\dots,d+2\},\;\; 3\leq i \leq d+2,
\end{align*} 
where $\{1,2,\dots,\hat{i},\dots,d+2\}$ is the set $\{1,2,\dots,d+2\}\setminus \{i\}$. We therefore see that a $(d,d-1)$-prepartition extender is the boundary of the $(d+1)$-simplex on vertex set $[d+2]$.

A $(d,d-2)$-prepartition extender has the following set of facets:
\begin{align*}
D_1 &= \{1,2,5,6\dots,d+3\}\\
D_2 &= \{3,4,5,\dots,d+3\}\\
W_{1,0} \cup W_{2,i} &= \{1,2,3,5,\dots,\hat{i},\dots,d+3\}\;\; 5\leq i \leq d+3\\
W_{1,1} \cup W_{2,i} &= \{2,3,4,5,\dots,\hat{i},\dots,d+3\}\;\; 5\leq i \leq d+3.
\end{align*}

\end{example}

\begin{remark} Let $\Delta$ be a $(d,k)$-prepartition extender given in Proposition~\ref{prop:PrePartExt} with specified facet $F$ and specified $k$-face $\sigma \in F$. Then, if we define $h_{\ell}((\Delta,\langle F \rangle)\cup\{\sigma\})$ to be the number of Boolean intervals in the partitioning of $(\Delta,\langle F\rangle)\cup\{\sigma\}$ whose bottom element has size $\ell$,
$$h_{\ell}((\Delta,\langle F \rangle)\cup\{\sigma\}) = \begin{cases}d-k, & \ell < k+1\\ d-k+1, & \ell = k+1\\ 0, & \text{otherwise}\end{cases}.$$\end{remark}

\begin{proof}For all $\ell < k+1$, there are exactly $(d-k)$ intervals $I_{i,j}$ in the partitioning above whose bottom elements have size $\ell$. If $\ell = k+1$, there are $d-k$ intervals $I_{i,j}$ whose bottom elements have size $\ell$, and the interval $I = [\sigma,D_1]$ also has a bottom element whose size is $\ell$.\end{proof}

\begin{proposition}\label{dk_part}
For all \(-1 \leq k \leq d\), there exists a $(d,k)$-partition extender.
\end{proposition}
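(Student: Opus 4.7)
The plan is to start from the $(d,k)$-prepartition extender $\Delta_0$ supplied by Proposition~\ref{prop:PrePartExt} and enlarge it to a partition extender. In the partition of $(\Delta_0,\ideal{F})\cup\squigs{\sigma}$ constructed there, the face $\sigma$ sits at the bottom of exactly one interval, namely $I=[\sigma,D_1]$, while every other interval $I_{i,j}$ is disjoint from the star of $\sigma$. Dropping $\sigma$ to obtain a partition of $(\Delta_0,\ideal{F})$ therefore requires re-partitioning only $I\setminus\squigs{\sigma}$, which is order-isomorphic to $2^{D_1\setminus\sigma}\setminus\squigs{\varnothing}$. When $d-k\leq 1$ this set has at most one element and causes no problem, so $\Delta_0$ is itself a partition extender; for $d-k\geq 2$ the intervals forced to have top $D_1$ cannot be chosen disjointly, and new facets must be adjoined to serve as alternative tops.

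My approach is to carry out this enlargement by induction on $d$. For $k\geq 0$ I would adjoin to $\Delta_0$ the facets $\sigma\cup\tau$, one for each facet $\tau$ of a $(d-k-1,-1)$-partition extender $L$ chosen to have $D_1\setminus\sigma$ and $D_2\setminus\sigma$ among its facets and $D_2\setminus\sigma$ as its specified facet. Such an $L$ is supplied by the inductive hypothesis since $d-k-1<d$. The resulting complex $\Delta$ satisfies $\lk_\Delta(\sigma)=L$, and the faces of $\Delta$ that contain $\sigma$ and lie in $(\Delta,\ideal{F})$ are exactly $\set{\sigma\cup\alpha:\alpha\in(L,\ideal{D_2\setminus\sigma})}$. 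Pushing the partitioning of $(L,\ideal{D_2\setminus\sigma})$ forward by $\alpha\mapsto\sigma\cup\alpha$ yields Boolean intervals in $\Delta$ whose tops are facets of $\Delta$, and combining these with the intervals $I_{i,j}$ inherited from $\Delta_0$ should give a partition of $(\Delta,\ideal{F})$. The analogous argument applied to $(L,\ideal{D_2\setminus\sigma})\cup\squigs{\varnothing}$ partitions $(\Delta,\ideal{F})\cup\squigs{\sigma}$, with the lift of the $\varnothing$-containing interval now anchored at $\sigma$.

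The base of the induction is $k=-1$, where $\sigma=\varnothing$ and there is no link to enrich; here I would give a direct construction, for instance by taking two disjoint $d$-simplices $D_1,D_2$ and adjoining enough auxiliary facets sharing a codimension-one face with $D_1$ to exhibit explicit partitions of $(\Delta,\ideal{D_2})$ and $(\Delta,\ideal{D_2})\cup\squigs{\varnothing}$. The key obstacle, and where most of the combinatorial work lies, is that each adjoined facet $\sigma\cup\tau$ drags in every subface $\sigma'\cup\tau'$ with $\sigma'\subsetneq\sigma$, and such subfaces do not contain $\sigma$ and so are not covered by the lifted intervals; unless they happen to lie in some $I_{i,j}$ they remain uncovered. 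Controlling them requires either choosing $L$ with a sufficiently constrained vertex set (for example inside $(D_1\cup D_2)\setminus\sigma$, which then constrains the inductive construction of $L$) or modifying the $I_{i,j}$ to absorb the stray subfaces, and verifying that one of these options succeeds is the main technical step of the proof.
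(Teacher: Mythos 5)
You correctly identify the key obstruction: in the prepartitioning of $(\Delta_0,\ideal{F})\cup\squigs{\sigma}$, the face $\sigma$ is at the bottom of the single interval $I=[\sigma,D_1]$, and deleting $\sigma$ forces all faces $\tau$ with $\sigma\subsetneq\tau\subseteq D_1$ to be re-housed. But your strategy for doing so differs from the paper's and is not carried to completion. You cone a $(d-k-1,-1)$-partition extender $L$ over $\sigma$ (so that $\lk_\Delta(\sigma)=L$), which drops dimension and so drives an induction on $d$, with an unspecified ``direct construction'' at $k=-1$. The paper instead fixes $d$ and inducts downward on $k$ from the trivial base case $k=d$ (a single simplex), and for each $\tau$ with $\sigma\subsetneq\tau\subseteq D_1$ it glues a $(d,\dim\tau)$-partition extender $K_\tau$ onto the prepartition extender along the \emph{facet} $D_1$ (identifying $F_\tau=D_1$ and $\sigma_\tau=\tau$). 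Because the gluing locus is an entire facet, the new faces introduced by $K_\tau$ are exactly $(K_\tau,\ideal{F_\tau})$, which $K_\tau$'s partition-extender property handles completely: one of its partitionings includes $\tau$ and the other excludes it, giving precisely the toggle needed when $\sigma$ is added to or removed from the partitioned set.

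Your argument stalls exactly where this gluing trick does its work, and you flag this yourself. The cone $\sigma * L$ drags in faces $\sigma'\cup\tau'$ with $\sigma'\subsetneq\sigma$, which are contained neither in your lifted intervals (all of which contain $\sigma$) nor, in general, in the inherited intervals $I_{i,j}$. Neither fix you float is verified, and the vertex-confinement option is genuinely restrictive: a face $\sigma'\cup\tau'$ of $\Delta_0$ with $\sigma'\subsetneq\sigma$ and $\tau'$ disjoint from $\sigma$ must have $\tau'\subseteq D_1\setminus\sigma$, $\tau'\subseteq D_2\setminus\sigma$, or $\tau'\subseteq W_{1,j}$ for some $j$, so you would need every non-$D_2$ face of $L$ to lie in one of these sets --- a constraint the inductively-produced $L$ has no reason to satisfy. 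The $k=-1$ case is also left open, and it is not small: for $d\geq 1$, two disjoint $d$-simplices are \emph{not} a $(d,-1)$-partition extender, since $\ideal{D_1}\setminus\squigs{\varnothing}$ cannot be written as a disjoint union of Boolean intervals each topped by the single maximal face $D_1$. The paper's decision to attach along $D_1$ rather than at $\sigma$ closes both gaps at once, and it is the idea your outline is missing.
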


\begin{proof}
Recall from Definition~\ref{int_def} that a $(d,k)$-partition extender consists of a pure $d$-dimensional complex $\Delta$, along with a specified facet $F$ and specified $k$-dimensional face $\sigma$ in $F$. We construct our $(d,k)$-partition extender inductively, starting with \(k=d\) and decreasing \(k\). First we note that a $(d,d)$-prepartition extender is in fact a $(d,d)$-partition extender. Indeed, in any partitioning of a $(d,d)$-prepartition extender, one of the intervals must be $[\sigma,\sigma]$, and so removing $\sigma$ and this interval gives the required partitioning of $(\Delta,\ideal{F})$.

Suppose that $(d,h)$-partition extenders exist for all $h>k$. We will construct a $(d,k)$-partition extender $K$ with specified facet $F$, and specified $k$-face $\sigma$. Let $K'$ be a $(d,k)$-prepartition extender with specified facet $F$ and specified $k$-face $\sigma$.

First, fix a partitioning of \( (K', \ideal{F}) \cup \squigs{\sigma}\). Let \(\tilde{F}\) be the top element in the interval containing \(\sigma\) in this partitioning. Let $\tau$ be an $h$-face of \(K'\) such that $ \sigma \subsetneq \tau \subseteq \tilde{F}$.  By induction, there exists a $(d,h)$-partition extender $K_{\tau}$ with specified facet $F_{\tau}$ and specified $h$-face $\sigma_{\tau}$. Attach this $(d,h)$-partition extender to $K'$ by identifying $F_{\tau}$ with \(\tilde{F}\) and identifying $\sigma_{\tau}$ with $\tau$. We define $K$ to be the complex obtained from $K'$ by attaching $K_{\tau}$ for each $\tau$ with $ \sigma \subsetneq \tau \subseteq \tilde{F}$.

The complex $K$ with specified facet $F$ and specified $k$-face $\sigma$ is a $(d,k)$-partition extender. To verify this, we need a partitioning of $(K,\ideal{F}) \cup \squigs{\sigma}$ and a partitioning of $(K,\ideal{F})$. We note that \(K\) consists of a \((d,k)\)-prepartition extender \(K'\), and many \((d,h)\)-partition extenders \(K_\tau\), for each \(k<h \leq d\).

First, $(K,\ideal{F}) \cup \squigs{\sigma}$ admits a partitioning consisting of
\begin{enumerate}
\item the partitioning of $(K',\ideal{F}) \cup \squigs{\sigma}$ arising from its status as a prepartition extender,
\item the partitionings of the $K_{\tau}$ such that \(\tau\) is not included in the partitioned set.
\end{enumerate}

Furthermore, $(K,\ideal{F})$ admits a partitioning consisting of
\begin{enumerate}
\item the partitioning of $(K',\ideal{F}) \cup \squigs{\sigma}$ excluding the interval \([\sigma, \tilde{F}]\),
\item the partitionings of the $K_{\tau}$ such that \(\tau\) is included in the partitioned set.
\end{enumerate}

Since both of these partitionings exist, \(K\) is a \((d,k)\)-partition extender, and by induction, \((d,k)\)-partition extenders exist for all pairs \((d,k)\) with \(d \geq k\).
\end{proof}

Previously, we had described \((d,k)\)-prepartition extenders. Both $(d,d)$- and $(d,d-1)$-prepartition extenders are in fact partition extenders. To illustrate the full construction of a \((d,k)\)-partition extender, we give a small example in which the partition extender differs from the prepartition extender.

\begin{example}\label{PartitioningExample}
We give an example of a $(3,1)$-partition extender $K$ using the construction in Proposition~\ref{dk_part}. We start with a $(3,1)$-prepartition extender: Following Proposition~\ref{prop:PrePartExt}, we construct the prepartition extender \[K' = \ideal{1256,3456,1236,1235,2346,2345}\] with specified facet $F=3456$ and specified face $\sigma = 56.$ This labeling is identical to the canonical \((3,1)-\)prepartition extender as constructed in Proposition \ref{prop:PrePartExt}.

We observe that $K'$ is exactly the canonical $(3,1)$-prepartition extender that we constructed earlier. The following is a partitioning of $(K',\ideal{3456}) \cup \set{56}$ given by our construction:
\begin{equation}\label{eq:(3,1)PrePartExt}
    [56,1256] \hspace{1em} [1,1236] \hspace{1em} [2,2346] \hspace{1em} [15,1235] \hspace{1em} [25,2345].
\end{equation}
We now must create partition extenders for each $56 \subsetneq \tau \subsetneq 1256$, i.e., we create $(3,2)$-partition extenders for the faces $156$ and $256$. Recall that the other intervals in \eqref{eq:(3,1)PrePartExt} are fixed and will be part of both partitionings.

For the face $156$, we construct the partition extender \[K'' = \ideal{7156,2156,7256,7216,7215}\] with specified facet $F=2156$ and specified face $\sigma=156.$ The bijection to our canonical \((3,2)-\)prepartion extender is induced by $(7,2,1,5,6) \mapsto (1,2,3,4,5)$. The following is a partitioning of $(K'',\ideal{2156}).$
\begin{equation}\label{eq:(3,2)PrePartExt1}
    [7156,7156] \hspace{1em} [7,7256] \hspace{1em} [71,216] \hspace{1em} [715,7215]
\end{equation}
For the face $256$, we construct the partition extender \[K''' = \ideal{8256,1256,8156,8126,8125}\] with specified facet $F=1256$ and specified face $\sigma=256.$ The bijection to our canonical \((3,2)-\)prepartion extender is induced by $(8,1,2,5,6) \mapsto (1,2,3,4,5)$. The following is a partitioning of $(K''',\ideal{1256}):$
\begin{equation}\label{eq:(3,2)PrePartExt2}
    [8256,8256] \hspace{1em} [8,8156] \hspace{1em} [82,8126] \hspace{1em} [825,8125].
\end{equation}
The $(3,1)$-partition extender is $K = K' \cup K'' \cup K'''$ with specified facet $F = 3456$ and specified face $\sigma=56$. Equations \eqref{eq:(3,1)PrePartExt}, \eqref{eq:(3,2)PrePartExt1}, and \eqref{eq:(3,2)PrePartExt2} together give a partitioning of $(K,\ideal{3456}) \cup \set{56}$.

For a partitioning of $(K,\ideal{3456})$, we take the partitionings from equations \eqref{eq:(3,1)PrePartExt}, \eqref{eq:(3,2)PrePartExt1}, and \eqref{eq:(3,2)PrePartExt2} and modify only the first interval in each line. We get the following:

\begin{center}
\begin{tabular}{lllll}
[1256,1256] &
[1,1236] &
[2,2346] &
[15,1235] &
[25,2345] \\

[156,1567] &
[7,2567] &
[17,1267] &
[157,1257] 
\rule{0pt}{12pt}\\

[256,2568] &
[8,1568] &
[28,1268] &
[258,1258].
\rule{0pt}{12pt}
\end{tabular}
\end{center}
Thus $K$ is a $(3,1)$-partition extender.
\end{example}

\section{Main Theorem}\label{Section::Main}\label{sec_partitions}

Now we are prepared to prove our main result.

\begin{theorem}\label{main}
Every pure simplicial complex has a partition extender.
\end{theorem}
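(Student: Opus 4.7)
My plan is to attach, for every facet $F_i$ of $\Delta$ and every proper subface $\rho \subsetneq F_i$, a $(d,\dim\rho)$-partition extender $K_{i,\rho}$ (whose existence is Proposition~\ref{dk_part}), with the specified facet of $K_{i,\rho}$ identified with $F_i$, the specified face identified with $\rho$, and all other vertices taken to be fresh, so that distinct $K_{i,\rho}$'s meet only along the $\langle F_i \rangle$'s inside $\Delta$. The candidate partition extender is then $\Gamma := \Delta \cup \bigcup_{i,\rho} K_{i,\rho}$.

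For the relative complex, the freshness of labels gives $(\Gamma,\Delta) = \bigsqcup_{i,\rho}(K_{i,\rho}, \langle F_i \rangle)$. Each summand is partitionable by the definition of a $(d,k)$-partition extender, and a disjoint union of partitionable relative complexes is partitionable, so this half is immediate.

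The real work is to partition $\Gamma$. For each proper face $\rho \in \Delta$ I would fix some index $i(\rho)$ with $\rho \subseteq F_{i(\rho)}$, and assemble a partition of $\Gamma$ from three families: the singleton intervals $[F_i, F_i]$ for each facet $F_i$ of $\Delta$; the partition of $(K_{i(\rho),\rho}, \langle F_{i(\rho)} \rangle) \cup \{\rho\}$ for each proper $\rho$ (the partition that absorbs $\rho$); and the partition of $(K_{j,\tau}, \langle F_j \rangle)$ for every remaining pair $(j,\tau)$ with $j \neq i(\tau)$. The main obstacle is checking that this actually tiles $\Gamma$: each facet $F_i$ of $\Delta$ is covered only by its singleton, since $F_i$ is never used as a specified face (we only attached extenders for $\rho \subsetneq F_i$); each proper face $\rho \in \Delta$ is absorbed uniquely by the extender chosen for it in the second family, because partitions of the third form live entirely in $K \setminus \langle F \rangle$ and thus never touch $\Delta$; and each face of $\Gamma$ outside $\Delta$ lies in a unique $K_{i,\rho} \setminus \langle F_i \rangle$ and is covered by that extender's assigned partition.
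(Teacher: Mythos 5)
Your argument is correct and follows the same strategy as the paper's proof: attach $(d,k)$-partition extenders from Proposition~\ref{dk_part} at faces of $\Delta$, then partition $\Gamma$ by letting each extender absorb its specified face, and partition $(\Gamma,\Delta)$ by letting it not. The only real difference is economy: the paper attaches a single extender per face $\sigma$ (choosing one facet containing $\sigma$, with the case $\sigma$ a facet handled by the trivial $(d,d)$-extender), whereas you attach one extender per (facet, proper subface) pair, which forces the extra bookkeeping of choosing $i(\rho)$ and introducing a third family of ``non-absorbing'' partitionings for the redundant copies. Your larger construction is still a valid partition extender, but the leaner one-extender-per-face version removes the need for these choices.
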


\begin{proof}
Let $\Delta$ be a pure $d$-dimensional complex. For each \(k\)-face \(\sigma\) of \(\Delta\), attach a \((d,k)\)-partition extender to \(\Delta\) by identifying \(\sigma\) and a facet containing \(\sigma\) to the specified faces of the \((d,k)\)-partition extender. Call this complex \(\Gamma\). By Proposition~\ref{dk_part}, \(\Gamma\) is a pure partitionable $d$-dimensional complex, with the partition where each \((d,k)\)-extender uses the \(\sigma\) it was attached to. Furthermore, \((\Gamma,\Delta)\) is partitionable, with the partition where each \((d,k)\)-extender is partitioned without the \(\sigma\) it was attached to. Therefore \(\Gamma\) is a partition extender for \(\Delta\).
\end{proof}

We now provide a combinatorial interpretation of the \(h\)-vector of a pure simplicial complex \(\Delta\) with a partition extender \(\Gamma\).
We can write the $f$-vector of $\Delta$ as
$$f_i(\Delta) = f_i(\Gamma) - f_i(\Gamma,\Delta).$$ 

Since the \(h\)-vector is a bijective linear transformation of the \(f\)-vector, we transform the above equation into  $$h_i(\Delta) = h_i(\Gamma) - h_i(\Gamma,\Delta).$$ Since both \(\Gamma\) and \((\Gamma,\Delta)\) are pure and partitionable, we may use the combinatorial interpretation of these values to give a combinatorial interpretation of \(h_i(\Delta)\). 

\begin{corollary}
If \(\Delta\) is a pure simplicial complex, then 
\begin{align*}
h_i(\Delta) = &|\{\text{intervals in a partitioning of \(\Gamma\) with bottom element of size}~ i\}| \\
&- |\{\text{intervals in a partitioning of \((\Gamma,\Delta)\) with bottom element of size}~ i\}|
\end{align*}
for any partition extender \(\Gamma\) of \(\Delta\).
\end{corollary}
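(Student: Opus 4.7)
The plan is to combine three facts that have all been set up earlier in the paper. First, from the definition of the relative $f$-vector, we have $f_i(\Delta) = f_i(\Gamma) - f_i(\Gamma,\Delta)$ for each $i$. Second, the transformation $f \mapsto h$ determined by \eqref{def:h} is an invertible linear transformation, and the $h$-vector of a relative complex is defined by the same relation, so applying it termwise yields the identity $h_i(\Delta) = h_i(\Gamma) - h_i(\Gamma,\Delta)$. This identity is already noted in the paragraph immediately preceding the corollary, so for my purposes the proof essentially reduces to interpreting each of the two terms on the right-hand side.

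Next, I would invoke Proposition~\ref{prop:h_vec} to give the combinatorial reading of those two terms. Since $\Gamma$ is by hypothesis a partition extender of $\Delta$, both $\Gamma$ and the relative complex $(\Gamma,\Delta)$ are partitionable, and both are pure of dimension $d = \dim \Delta$. Purity of $(\Gamma,\Delta)$ deserves one sentence of justification: any face $\sigma$ of $(\Gamma,\Delta)$ extends to a facet $F$ of $\Gamma$, and if $F$ were in $\Delta$ then so would $\sigma$ be, a contradiction; hence $F$ is a $d$-dimensional facet of the relative complex containing $\sigma$. With purity and partitionability in hand, Proposition~\ref{prop:h_vec} applies to each of $\Gamma$ and $(\Gamma,\Delta)$ and produces precisely the two counts in the statement of the corollary. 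Substituting these into the identity $h_i(\Delta) = h_i(\Gamma) - h_i(\Gamma,\Delta)$ finishes the proof.

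I do not anticipate any substantive obstacle: the difficult work — producing a partition extender $\Gamma$ for an arbitrary pure $\Delta$ — was carried out in Theorem~\ref{main} via the inductive construction of Section~\ref{sec_constructions}. This corollary is a straightforward bookkeeping consequence of that theorem together with Proposition~\ref{prop:h_vec} and the linearity of the $f$-to-$h$ transformation, and its proof should fit in a few lines.
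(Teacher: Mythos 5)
Your proposal is correct and takes essentially the same route as the paper: the paper likewise derives $h_i(\Delta) = h_i(\Gamma) - h_i(\Gamma,\Delta)$ from $f_i(\Delta) = f_i(\Gamma) - f_i(\Gamma,\Delta)$ via linearity of the $f$-to-$h$ transformation, and then reads off the two right-hand terms using Proposition~\ref{prop:h_vec}. Your one-sentence justification that $(\Gamma,\Delta)$ is pure (a hypothesis of Proposition~\ref{prop:h_vec} that the paper leaves implicit) is a small but welcome addition.
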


In our construction of the partition extender $\Gamma$ of $\Delta$, there is significant overlap between the sets of intervals in the partitioning of $\Gamma$ and the partitioning of $(\Gamma,\Delta)$. Keeping track of the heights of the intervals that differ between the partitioning of $(\Gamma,\Delta)$ and that of $\Gamma$ yields
\[h_i(\Gamma) - h_i(\Gamma,\Delta) = \sum_{j=0}^{i}(-1)^{i-j}\binom{d-j}{i-j}f_{j-1}(\Delta),\]
which is exactly the formula for $h_i(\Delta)$ in terms of the $f_j(\Delta)$. Thus our construction gives a combinatorial witness to the algebraic transformation between $h(\Delta)$ and $f(\Delta)$.

\section{Nonpure Partitionability}\label{sec_nonpure_partition}
Our construction of a partition extender can be applied to nonpure complexes in a natural way. Suppose that $\Delta$ is a nonpure complex. 
If $\sigma$ is a face of $\Delta$, we write 
$$d_{\Delta}(\sigma) \coloneqq \max_{\tau \in \Delta} \{ \dim (\tau) \mid \sigma \subseteq \tau \}.$$
In \cite[Definition 3.1]{Shellable_Non_Pure_1}, Bj\"orner and Wachs define a two-dimensional array called the \emph{$f$-triangle} $f^{\triangle}(\Delta)$ that refines the $f$-vector of $\Delta$, with entries given by
$$f_{i,j}(\Delta) = |\{\sigma \in \Delta \mid d_{\Delta}(\sigma) = i-1, \; \dim(\sigma) = j-1 \}|.$$

Bj\"orner and Wachs also define a refinement of the $h$-vector called the $h$-triangle $h^{\triangle}(\Delta)$ which is a two-dimensional array with entries $h_{i,j}(\Delta)$ that is obtained from $f^{\triangle}(\Delta)$ by applying the $f$-vector to $h$-vector transformation on each row of $f^{\triangle}(\Delta)$. More precisely,
$$h_{i,j}(\Delta) = \sum_{k=0}^j(-1)^{j-k}\binom{i-k}{j-k}f_{i,k}.$$
The $f$- and $h$-triangles of a relative complex $(\Gamma,\Delta)$ are defined analogously.\footnote{Note that if $\dim(\Gamma) \neq \dim(\Delta)$ then the $f$-triangles of $\Gamma$ and $\Delta$ will have different dimensions.}

\begin{remark}If $\Gamma \supseteq \Delta$ with $\dim(\Gamma) = \dim(\Delta)$ and $d_{\Delta}(\sigma) = d_{\Gamma}(\sigma)$ for all $\sigma \in \Delta$, then $f^{\triangle}(\Gamma,\Delta) = f^{\triangle}(\Gamma) - f^{\triangle}(\Delta)$. Indeed, suppose that $\sigma \in \Gamma$ contributes to $f_{i,j}(\Gamma)$. Either $\sigma \in \Delta$, in which case by assumption it contributes to $f_{i,j}(\Delta)$, or $\sigma \in (\Gamma,\Delta)$, in which case it contributes to $f_{i,j}(\Gamma,\Delta)$. Since the $f$-triangle to $h$-triangle transformation is linear and $d_{\Delta}(\sigma) = d_{\Gamma}(\sigma)$, we also have $h^{\triangle}(\Gamma,\Delta) = h^{\triangle}(\Gamma) - h^{\triangle}(\Delta)$.
\end{remark}

It is natural to assume that the entries $h_{i,j}$ of the $h$-triangle of a partitionable nonpure complex have an analogous interpretation to the entries of the $h$-vector of a pure partitionable complex. This is \emph{false} in general. In \cite[Example~1]{Hachimori_sequential}, Hachimori gives an example of a partitionable nonpure complex whose $h$-triangle has a negative entry.

However, Hachimori introduces several strictly stronger variants of partitionability for nonpure complexes \cite{Hachimori_sequential}; among these is the existence of an \emph{$h$-compatible} partitioning of $\Delta$, i.e., a partitioning of the face poset of $\Delta$ where $h_{i,j}(\Delta)$ is the number of Boolean intervals in the partitioning whose bottom element is a face of size $j$ and whose top element is a facet of size $i$. In \cite[Theorem 2]{Hachimori_sequential}, Hachimori shows that $h$-compatibility is equivalent to a property he calls {layer-compatibility}: A partitioning 
$$P(\Delta) = \bigsqcup_{F \text{ facet of }\Delta}[\Psi(F),F]$$
of the face poset of $\Delta$ is \emph{layer-compatible} if the restriction
$$\bigsqcup_{\substack{F \text{ facet of } \Delta\\ \dim(F) \geq r}} [\Psi(F),F]$$
is a partitioning of the face poset of $\langle F \mid F \text{ facet of } \Delta, \; \dim(F) \geq r\rangle$ for all $0\leq r \leq \dim(\Delta)$.

\begin{remark}
While \cite[Theorem 2]{Hachimori_sequential} is stated in terms of absolute complexes, the same proof works for relative complexes as well. 
\end{remark}

We can now prove a nonpure analog of Theorem~\ref{main}. 

\begin{theorem}\label{Main_nonpure}
Let $\Delta$ be a nonpure complex. Then there is a complex $\Gamma \supseteq \Delta$ with $\dim(\Gamma) = \dim(\Delta)$ such that $\Gamma$ and $(\Gamma,\Delta)$ are layer-compatibly partitionable. 
\end{theorem}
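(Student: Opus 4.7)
\emph{Plan.} The strategy is to mimic the construction in the proof of Theorem~\ref{main}, but tailor the dimension of each attached extender to the local dimension of \(\Delta\). For every face \(\sigma\) of \(\Delta\), set \(d_\sigma := d_\Delta(\sigma)\), choose a facet \(F_\sigma \supseteq \sigma\) of \(\Delta\) with \(\dim F_\sigma = d_\sigma\), and use Proposition~\ref{dk_part} to attach a \((d_\sigma,\dim\sigma)\)-partition extender \(E_\sigma\) to \(\Delta\) by identifying its specified face with \(\sigma\) and its specified facet with \(F_\sigma\). I would use a fresh vertex set for each extender so that distinct \(E_\sigma\) meet only through faces of \(\Delta\). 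Let \(\Gamma\) be the resulting complex.

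\emph{Setting up the \(h\)-triangle identity and the partitionings.} First I would verify the hypotheses of the Remark preceding the theorem. Every \(E_\sigma\) has dimension at most \(\dim\Delta\), so \(\dim\Gamma = \dim\Delta\). Since extenders use disjoint fresh vertices, any facet of \(\Gamma\) containing a face \(\sigma \in \Delta\) is either a facet of \(\Delta\) itself or a facet of some \(E_\tau\) with \(\tau \supseteq \sigma\); in the latter case its dimension equals \(d_\tau \le d_\sigma\). Therefore \(d_\Gamma(\sigma) = d_\Delta(\sigma)\) and \(h^\triangle(\Gamma,\Delta) = h^\triangle(\Gamma) - h^\triangle(\Delta)\). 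Next, exactly as in Theorem~\ref{main}, I would glue the partitionings of \((E_\sigma, \langle F_\sigma\rangle)\cup\{\sigma\}\) to form a partitioning of \(\Gamma\), and the partitionings of \((E_\sigma, \langle F_\sigma\rangle)\) to form one of \((\Gamma,\Delta)\). Each face of \(\Gamma\) (respectively of \((\Gamma,\Delta)\)) lies in a unique interval because each face of \(\Delta\) is the back-added bottom of the interval from its own \(E_\sigma\), and each face involving a fresh vertex sits in the unique extender that introduced it.

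\emph{Layer-compatibility and the main obstacle.} The decisive new ingredient is layer-compatibility. The key observation is that \(E_\sigma\) is pure of dimension \(d_\sigma\), so every interval in its partitioning has top element of dimension exactly \(d_\sigma\). Restricting the partitioning of \(\Gamma\) to intervals with top dimension \(\ge r\) therefore selects precisely the \(E_\sigma\) attached at faces with \(d_\Delta(\sigma) \ge r\). A case split on whether a face of \(\Gamma\) in the subcomplex generated by facets of dimension \(\ge r\) lies in \(\Delta\) or involves a fresh vertex shows these intervals cover that subcomplex exactly; the same argument works verbatim for \((\Gamma,\Delta)\). The main obstacle I anticipate is confirming the covering equality cleanly when a face of \(\Delta\) lies inside several \(F_\sigma\) simultaneously, but because each such face is the back-added element of exactly one extender---namely its own---the bookkeeping reduces to the same argument as in the pure case.
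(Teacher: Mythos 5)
Your proposal takes essentially the same approach as the paper: both attach a $(d_\Delta(\sigma),\dim\sigma)$-partition extender at every face $\sigma$ of $\Delta$, observe that $d_\Gamma(\sigma)=d_\Delta(\sigma)$ for all $\sigma\in\Delta$, and deduce layer-compatibility from the fact that each extender $E_\sigma$ is pure of dimension $d_\Delta(\sigma)$, so restricting to intervals with top dimension $\ge r$ picks out exactly the extenders attached at faces with $d_\Delta(\sigma)\ge r$; the paper simply states that these checks are routine, whereas you have spelled them out correctly. One small wording fix: in bounding $d_\Gamma(\sigma)$, a facet of $E_\tau$ may contain $\sigma$ whenever $\sigma\subseteq F_\tau$ (not only when $\tau\supseteq\sigma$), but since $\dim F_\tau = d_\tau$ this still gives $d_\tau\le d_\sigma$, so the argument stands.
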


\begin{proof}
Let $\Delta$ be a nonpure complex, and let $\Gamma$ be the complex obtained by attaching a $(d_{\Delta}(\sigma),k)$-partition extender to each $k$-face $\sigma$ of $\Delta$ for all $k$. Clearly, $d_{\Delta}(\sigma) = d_{\Gamma}(\sigma)$ for all $\sigma \in \Delta$, so we can write the $h^{\triangle}(\Delta)$ as the difference $h^{\triangle}(\Gamma) - h^{\triangle}(\Gamma,\Delta)$.

It is easy to check that the partitionings of $\Gamma$ and $(\Gamma,\Delta)$ we construct in Proposition~\ref{dk_part} are both layer-compatible.
\end{proof}

Since layer-compatibility implies $h$-compatibility, we now have a combinatorial interpretation of the $h$-triangle of any nonpure complex. We define an \emph{$(i,j)$-interval} of $\Delta$ to be a Boolean interval of $P(\Delta)$ whose bottom element has size $j$ and whose top element is a facet of size $i$.

\begin{corollary} For any nonpure complex $\Delta$, we have
\begin{align*}
h_{i,j}(\Delta) = &|\{(i,j)\text{-intervals in an } h \text{-compatible partitioning of }\Gamma\}|\\
& - |\{(i,j)\text{-intervals in an } h \text{-compatible partitioning of }(\Gamma,\Delta)\}|,
\end{align*}
where $\Gamma$ is the partition extender constructed in Theorem~\ref{Main_nonpure}.
\end{corollary}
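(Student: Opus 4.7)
The plan is to piece together three ingredients that are already in hand by the time the corollary is stated. First, Theorem~\ref{Main_nonpure} supplies a $\Gamma \supseteq \Delta$ with $\dim(\Gamma)=\dim(\Delta)$ such that both $\Gamma$ and the relative complex $(\Gamma,\Delta)$ admit layer-compatible partitionings; moreover, the construction attaches extenders only along faces of $\Delta$, so $d_\Delta(\sigma)=d_\Gamma(\sigma)$ for every $\sigma \in \Delta$. Second, the remark preceding Theorem~\ref{Main_nonpure} shows that under exactly these hypotheses $f^\triangle$, and hence $h^\triangle$, is additive across the pair, giving
\begin{equation*}
h_{i,j}(\Delta) \;=\; h_{i,j}(\Gamma) \;-\; h_{i,j}(\Gamma,\Delta)
\end{equation*}
for every $(i,j)$. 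Third, Hachimori's \cite[Theorem 2]{Hachimori_sequential}, together with the remark that its proof carries over verbatim to relative complexes, says that every layer-compatible partitioning is $h$-compatible, i.e.\ $h_{i,j}$ counts the $(i,j)$-intervals in the partitioning.

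With these in place the proof is a two-line bookkeeping argument. I would start by invoking Theorem~\ref{Main_nonpure} to produce $\Gamma$ and the two layer-compatible partitionings explicitly. Then I would apply the relative version of \cite[Theorem 2]{Hachimori_sequential} to each of them to rewrite both $h_{i,j}(\Gamma)$ and $h_{i,j}(\Gamma,\Delta)$ as the number of $(i,j)$-intervals in the respective partitioning. Finally, I would substitute these interpretations into the additivity identity above to obtain the claimed formula.

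The only genuine point that needs care is the verification that the hypotheses of the preceding remark hold for our specific $\Gamma$. This reduces to checking that attaching a $(d_\Delta(\sigma),k)$-partition extender along a facet through $\sigma$ does not enlarge $d_\Delta(\sigma)$ for any face already in $\Delta$ — which is immediate from the fact that the attached extender has dimension exactly $d_\Delta(\sigma)$ and is glued along a facet of that dimension. Once this sanity check is recorded, the corollary follows formally; there is no further combinatorial work to do.
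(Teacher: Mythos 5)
Your proposal is correct and follows exactly the route the paper intends: combine the additivity of $h^\triangle$ (from the remark, justified by $d_\Delta(\sigma)=d_\Gamma(\sigma)$), Theorem~\ref{Main_nonpure}'s layer-compatible partitionings of $\Gamma$ and $(\Gamma,\Delta)$, and the fact that layer-compatibility implies $h$-compatibility, which gives the $(i,j)$-interval count. The paper leaves this corollary's proof implicit, but your reconstruction — including the sanity check that attaching the $(d_\Delta(\sigma),k)$-extenders does not increase $d_\Delta$ of any face — is precisely the argument that makes it go through.
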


\section{Cohen--Macaulay Extenders}\label{sec_CM}

Given the existence of partition extenders of pure simplicial complexes, it seems natural to ask if extenders exist for other well-studied combinatorial properties of simplicial complexes. A relative complex \((\Gamma,\Delta)\) is \defword{relative Cohen--Macaulay} if \(I_\Gamma / I_\Delta\) is a Cohen--Macaulay \(\Bbbk[\mathbf{x}]\)-module. Equivalently, a relative complex is relative Cohen--Macaulay if the relative homology \(\tilde{H}_i(\lk_\Gamma(\sigma),\lk_\Delta(\sigma))\) is trivial except possibly when \(|\sigma|+i=d\), where \(d\) is the dimension of \(\Gamma\) \cite[Theorem III.7.2]{Green_Stanley}.

\begin{definition}\label{def:cm_ext}
Let $\Delta$ be a pure $d$-dimensional simplicial complex. A  $d$-dimensional complex $\Gamma$ is a \defword{Cohen--Macaulay extender} for $\Delta$ if
\begin{itemize}
\item $\Delta \subseteq \Gamma$.
\item $\Gamma$ is Cohen--Macaulay.
\item The relative complex $(\Gamma, \Delta)$ is relative Cohen--Macaulay.
\end{itemize}
\end{definition}

Unlike the case for partition extenders, there is a large class of pure complexes for which Cohen--Macaulay extenders do not exist. The \emph{depth} of a simplicial complex \(\Delta\) is defined as \(\depth \Bbbk[\Delta]\), the depth of its Stanley--Reisner ring. By applying Hochster's formula \cite{Hochster}, it can be shown that \(\depth \Bbbk[\Delta]\) is the largest integer \(h\) such that \(\tilde{H}_i(\lk_\Delta(\sigma))\) is trivial whenever $|\sigma| + i + {1} < h $ for all $ -1 < i < d$ and $\sigma \in \Delta$. We recall that for a \(d\)-dimensional simplicial complex \(\Delta\), \(\dim \Bbbk[\Delta]= d+1\).

\begin{proposition}\label{Prop::NoDice}
If $\Delta$ is a simplicial complex such that $\depth \Bbbk[\Delta] < \dim \Bbbk[\Delta] - 1$, then $\Delta$ does not have a Cohen--Macaulay extender.
\end{proposition}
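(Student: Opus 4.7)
The plan is to argue by contrapositive: I will assume that $\Delta$ admits a Cohen--Macaulay extender $\Gamma$ and show that this forces $\depth \Bbbk[\Delta] \geq d = \dim \Bbbk[\Delta] - 1$. The engine of the proof is the long exact sequence of the pair $(\lk_\Gamma(\sigma), \lk_\Delta(\sigma))$, combined with the topological/Reisner-style characterizations of both the (absolute) Cohen--Macaulay property of $\Gamma$ and the relative Cohen--Macaulay property of $(\Gamma, \Delta)$, together with the Hochster-type characterization of depth recalled in the excerpt.

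Fix an arbitrary face $\sigma \in \Delta$. Since $\Gamma$ is Cohen--Macaulay of dimension $d$, Reisner's criterion gives $\tilde H_i(\lk_\Gamma(\sigma)) = 0$ for all $i \neq d - |\sigma|$. Since $(\Gamma, \Delta)$ is relative Cohen--Macaulay of dimension $d$, the characterization quoted in the excerpt (from \cite[Theorem III.7.2]{Green_Stanley}) gives $\tilde H_i(\lk_\Gamma(\sigma), \lk_\Delta(\sigma)) = 0$ for all $i \neq d - |\sigma|$. Feeding these two vanishing statements into the long exact sequence
\[
\cdots \to \tilde H_{i+1}(\lk_\Gamma(\sigma), \lk_\Delta(\sigma)) \to \tilde H_i(\lk_\Delta(\sigma)) \to \tilde H_i(\lk_\Gamma(\sigma)) \to \tilde H_i(\lk_\Gamma(\sigma), \lk_\Delta(\sigma)) \to \cdots,
\]
whenever $i \neq d - |\sigma|$ and $i+1 \neq d - |\sigma|$ both flanking terms vanish and so $\tilde H_i(\lk_\Delta(\sigma)) = 0$. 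In particular this holds for every $i \leq d - |\sigma| - 2$, i.e., whenever $|\sigma| + i + 1 < d$.

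Since this conclusion is valid for every $\sigma \in \Delta$, the characterization of depth recorded just before the proposition statement yields $\depth \Bbbk[\Delta] \geq d = \dim \Bbbk[\Delta] - 1$, directly contradicting the standing hypothesis $\depth \Bbbk[\Delta] < \dim \Bbbk[\Delta] - 1$. Thus no such $\Gamma$ can exist. There is no real obstacle here: the only thing to be careful about is bookkeeping of the two ``exceptional'' degrees $d - |\sigma| - 1$ and $d - |\sigma|$ that are allowed to survive in $\tilde H_\ast(\lk_\Delta(\sigma))$, which are precisely the two degrees the depth bound $d = \dim \Bbbk[\Delta] - 1$ permits to be nonzero; everything smaller is killed by the long exact sequence argument.
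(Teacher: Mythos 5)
Your proof is correct and relies on exactly the same machinery as the paper's: the long exact sequence of the pair $(\lk_\Gamma(\sigma),\lk_\Delta(\sigma))$, Reisner's criterion for $\Gamma$, the Stanley characterization of relative Cohen--Macaulayness for $(\Gamma,\Delta)$, and the Hochster-type characterization of depth. The only difference is one of framing: the paper runs the argument forward (fix a witness $\sigma,i$ with nontrivial $\tilde H_i(\lk_\Delta(\sigma))$ in low degree, and show that any Cohen--Macaulay $\Gamma\supseteq\Delta$ is forced to have nontrivial relative homology in a forbidden degree), whereas you take the contrapositive (assume both Cohen--Macaulay conditions and show the low-degree homology of $\lk_\Delta(\sigma)$ must vanish for every $\sigma$, forcing $\depth\Bbbk[\Delta]\geq d$); these are the same computation read from the two ends of the exact sequence.
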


\begin{proof}
Let $\Delta$ be a $d$-dimensional complex with $\depth \Bbbk[\Delta] < \dim \Bbbk[\Delta] - 1$. By definition, there is a face $\sigma \in \Delta$ and an index \(i\) such that \(\tilde{H}_i(\lk_\Delta(\sigma))\) is nontrivial where \(|\sigma|+i+{1} = \depth \Bbbk[\Delta] \le d-1\); equivalently, $i+1 \le d - \abs{\sigma} -1$. 

Suppose \(\Gamma\) is a \(d\)-dimensional complex such that \(\Gamma\) is Cohen--Macaulay and \(\Delta \subseteq \Gamma\). We can write the long exact sequence of relative homology for the pair \((\lk_\Gamma(\sigma),\lk_\Delta(\sigma))\).

\[\begin{tikzpicture}[descr/.style={fill=white,inner sep=1.5pt},scale=1]
        \matrix (m) [
            matrix of math nodes,
            row sep=1em,
            column sep=1.1em,
            text height=1.5ex, text depth=0.25ex
        ]
        { 0 &  \tilde{H}_{d-|\sigma|}(\lk_\Delta(\sigma))  & \tilde{H}_{d-|\sigma|}(\lk_\Gamma(\sigma)) & \tilde{H}_{d-|\sigma|}((\lk_\Gamma(\sigma),\lk_\Delta(\sigma))) \\
          \mbox{}  &   \tilde{H}_{d-|\sigma|-1}(\lk_\Delta(\sigma))  & \tilde{H}_{d-|\sigma|-1}(\lk_\Gamma(\sigma)) & \tilde{H}_{d-|\sigma|-1}((\lk_\Gamma(\sigma),\lk_\Delta(\sigma)))  \\
          \mbox{}  &  \tilde{H}_{d-|\sigma|-2}(\lk_\Delta(\sigma))  & \tilde{H}_{d-|\sigma|-2}(\lk_\Gamma(\sigma)) & \tilde{H}_{d-|\sigma|-2}((\lk_\Gamma(\sigma),\lk_\Delta(\sigma))) \\
         \mbox{}  & \tilde{H}_{d-|\sigma|-3}(\lk_\Delta(\sigma))  & \tilde{H}_{d-|\sigma|-3}(\lk_\Gamma(\sigma))  & \mbox{} &  \\
        };

        \path[overlay,->, font=\scriptsize,>=latex]
        (m-3-4) edge[out=355,in=175] (m-4-2)
        (m-4-2) edge (m-4-3)
        (m-4-3) edge[dashed] (m-4-4);
        \path[overlay,->, font=\scriptsize,>=latex]
        (m-3-2) edge (m-3-3)
        (m-3-3) edge (m-3-4);
        \path[overlay,->, font=\scriptsize,>=latex]
        (m-1-4) edge[out=355,in=175] (m-2-2)
        (m-2-2) edge (m-2-3)
        (m-2-3) edge (m-2-4)
        (m-2-4) edge[out=355,in=175] (m-3-2);
        \path[overlay,->, font=\scriptsize,>=latex]
        (m-1-3) edge (m-1-4)
        (m-1-2) edge (m-1-3)
        (m-1-1) edge (m-1-2);
\end{tikzpicture}\]

Since $\Gamma$ is Cohen--Macaulay, we know that \(\tilde{H}_j(\lk_\Gamma(\sigma))\) is trivial whenever \(j < d - |\sigma|\). This observation lets us break up the long exact sequence into the following exact sequences for each \(\ell \geq 1\):
\[\begin{tikzpicture}[descr/.style={fill=white,inner sep=1.5pt}]
        \matrix (m) [
            matrix of math nodes,
            row sep=1em,
            column sep=1.8em,
            text height=1.5ex, text depth=0.25ex
        ]
        { 0 & \tilde{H}_{d-|\sigma|-\ell}((\lk_\Gamma(\sigma),\lk_\Delta(\sigma))) & \tilde{H}_{d-|\sigma|-\ell-1}(\lk_\Delta(\sigma)) & 0 \\
        };

        \path[overlay,->, font=\scriptsize,>=latex]
        (m-1-1) edge (m-1-2)
        (m-1-2) edge (m-1-3)
        (m-1-3) edge (m-1-4);
\end{tikzpicture}\]
Each of these middle maps is an isomorphism. Since \(\tilde{H}_i(\lk_\Delta(\sigma))\) is nontrivial, \(\tilde{H}_{i+1}((\lk_\Gamma(\sigma),\lk_\Delta(\sigma)))\) is also nontrivial.
Since \(i+1 \le d - |\sigma|-1\), 
the relative complex \((\Gamma, \Delta)\) 
is not
relative Cohen--Macaulay. Therefore there is no Cohen--Macaulay extender for \(\Delta\).
\end{proof}

\begin{theorem}\label{thm::CM}
Let $\Delta$ be a simplicial complex. Then $\Delta$ has a Cohen--Macaulay extender if and only if $\depth\Bbbk [\Delta] \ge \dim\Bbbk [\Delta] - 1$.
\end{theorem}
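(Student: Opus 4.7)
The ``only if'' direction is precisely Proposition~\ref{Prop::NoDice}, so I focus on the ``if'' direction. The argument splits into two cases based on $\depth \Bbbk[\Delta]$. When $\depth \Bbbk[\Delta] = \dim \Bbbk[\Delta]$, the complex $\Delta$ is itself Cohen--Macaulay and I simply take $\Gamma = \Delta$; the relative complex $(\Delta, \Delta)$ has no faces and is vacuously relative Cohen--Macaulay, so the three conditions of Definition~\ref{def:cm_ext} hold.

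The substantive case is $\depth \Bbbk[\Delta] = \dim \Bbbk[\Delta] - 1$; set $d = \dim \Delta$. Here I will take $\Gamma$ to be any $d$-dimensional Cohen--Macaulay complex containing $\Delta$, for concreteness the $d$-skeleton of the full simplex on the vertex set of $\Delta$, which is well known to be pure shellable and hence Cohen--Macaulay. Since every face of $\Delta$ has dimension at most $d$, this is a valid superset, and only the relative Cohen--Macaulay condition for $(\Gamma, \Delta)$ remains to be verified.

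For each $\sigma \in \Delta$, I will analyze the long exact sequence of the pair $(\lk_{\Gamma}(\sigma), \lk_{\Delta}(\sigma))$ that appears in the proof of Proposition~\ref{Prop::NoDice}. Cohen--Macaulayness of $\Gamma$ forces $\tilde{H}_i(\lk_{\Gamma}(\sigma)) = 0$ for all $i < d - |\sigma|$, while the depth-$d$ hypothesis on $\Delta$ forces $\tilde{H}_i(\lk_{\Delta}(\sigma)) = 0$ for all $i < d - |\sigma| - 1$. Feeding both vanishings into the long exact sequence yields $\tilde{H}_i(\lk_{\Gamma}(\sigma), \lk_{\Delta}(\sigma)) \cong \tilde{H}_{i-1}(\lk_{\Delta}(\sigma)) = 0$ for all $i < d - |\sigma|$; for $i > d - |\sigma|$ the relative homology vanishes on dimensional grounds. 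Thus only the top relative homology in degree $d - |\sigma|$ can be nonzero, which is precisely the relative Cohen--Macaulay condition.

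The only real work is bookkeeping the index shifts correctly. Whereas Proposition~\ref{Prop::NoDice} used the connecting homomorphism to move a nonvanishing class from $\Delta$ into a forbidden degree of the relative homology, here I will run the same exact sequence in the opposite direction, using vanishings on both absolute sides to force vanishing of the relative homology. The hypothesis $\depth \Bbbk[\Delta] \ge \dim \Bbbk[\Delta] - 1$ provides exactly a one-degree gap between the vanishing ranges of $\lk_{\Delta}(\sigma)$ and $\lk_{\Gamma}(\sigma)$, which is precisely what the connecting map consumes; anything weaker would leave a gap that the exact sequence cannot close, consistent with Proposition~\ref{Prop::NoDice}.
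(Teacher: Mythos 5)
Your ``only if'' direction is the same as the paper's (an appeal to Proposition~\ref{Prop::NoDice}), but your ``if'' direction takes a genuinely different route. The paper writes the short exact sequence of modules $0 \to I_\Delta/I_\Gamma \to \Bbbk[\Gamma] \to \Bbbk[\Delta] \to 0$ and invokes the Depth Lemma once to read off $\depth(I_\Delta/I_\Gamma) = \dim \Bbbk[\Gamma] - 1$, which is the relative Cohen--Macaulay condition. You instead reuse the long exact homology sequence of the pairs $\bigl(\lk_\Gamma(\sigma),\lk_\Delta(\sigma)\bigr)$ that already appears in the proof of Proposition~\ref{Prop::NoDice}, running it ``in reverse'': the Cohen--Macaulayness of $\Gamma$ kills $\tilde H_i(\lk_\Gamma(\sigma))$ for $i < d - |\sigma|$, and the hypothesis $\depth \Bbbk[\Delta] \geq d$ kills $\tilde H_{i-1}(\lk_\Delta(\sigma))$ in the same range, so the connecting map squeezes the relative homology to zero below the top degree. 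Both arguments are correct and both deliver the stronger conclusion (stated by the paper after the theorem) that \emph{any} Cohen--Macaulay complex of the same dimension containing $\Delta$ works. The paper's proof is more concise and lives entirely in commutative algebra; yours has the pedagogical virtue that both directions of the theorem become parallel applications of the same long exact sequence, making the one-degree gap in the depth hypothesis visibly the exact amount that the connecting homomorphism consumes.

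Two small bookkeeping points. First, the case split on $\depth \Bbbk[\Delta]$ is unnecessary: your long-exact-sequence argument handles $\depth = \dim$ uniformly, since a stronger vanishing hypothesis on $\lk_\Delta(\sigma)$ only helps. Second, the relative Cohen--Macaulay criterion is quantified over all faces $\sigma \in \Gamma$, not just $\sigma \in \Delta$; for $\sigma \in \Gamma \setminus \Delta$ the link $\lk_\Delta(\sigma)$ is void, so the relative homology reduces to $\tilde H_i(\lk_\Gamma(\sigma))$ and the condition follows from $\Gamma$ being Cohen--Macaulay. You should say this explicitly, though it is a one-line fill.
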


\begin{proof}
The case that $\depth\Bbbk [\Delta] < \dim\Bbbk [\Delta] - 1$ is covered by Proposition~\ref{Prop::NoDice}, so we assume that $\depth\Bbbk [\Delta] \ge \dim\Bbbk [\Delta] - 1$.

Let \(\Delta\) be a \(d\)-dimensional simplicial complex with depth at least $d$, and let \(\Gamma\) be a Cohen--Macaulay \(d\)-dimensional complex that contains \(\Delta\). We begin by writing a short exact sequence of modules over \(\Bbbk [x_1,\ldots,x_n]\) with \(I_\Delta\) and \(I_\Gamma\) as the Stanley--Reisner ideals associated to \(\Delta\) and \(\Gamma\).
\[0 \to I_\Delta / I_\Gamma \to \Bbbk[\Gamma] \to \Bbbk[\Delta] \to 0\]
By the assumptions on \(\Delta\) and \(\Gamma\), we can see that \(\depth\Bbbk[\Gamma]=\dim\Bbbk[\Gamma]\) and \(\depth\Bbbk[\Delta] \geq \dim\Bbbk[\Delta] - 1 = \dim\Bbbk[\Gamma] - 1 \). By the Depth Lemma \cite[Proposition 1.2.9]{Bruns_Herzog}, we get that \(\depth(I_\Delta / I_\Gamma)=\dim\Bbbk[\Gamma] - 1 \). This is equivalent to saying that \((\Gamma,\Delta)\) is relative Cohen--Macaulay. Therefore \(\Gamma\) is a Cohen--Macaulay extender of \(\Delta\).
\end{proof}

Theorem~\ref{thm::CM} shows that if $\depth \Bbbk[\Delta] \geq \dim \Bbbk[\Delta] - 1$, then \emph{any} Cohen--Macaulay complex $\Gamma$ of the same dimension that contains $\Delta$ is a Cohen--Macaulay extender for $\Delta$. If $\Delta$ is a $d$-dimensional complex on $n+1$ vertices, then perhaps the most natural Cohen--Macaulay extender to consider is the $d$-skeleton of the $n$-simplex $\Delta^{(d)}_n$, which is 
$$
\Delta^{(d)}_n = \{ \sigma \subseteq [n+1] ~:~ |\sigma| \le d+1 \}.
$$

In particular, we note that if a Cohen--Macaulay extender exists for a complex, then we can construct one without introducing new vertices.

Codenotti, Katth\"{a}n, and Sanyal recently classified the $h$-vectors of relative Cohen--Macaulay complexes. In \cite[Theorem 5.7]{CKS}, it is shown that $(h_0,\dots,h_{d+1})$ is the $h$-vector of a proper Cohen--Macaulay relative complex if and only if $h_0=0$ and $h_i \ge 0$ for all $i$, answering a question of Bj\"{o}rner in \cite{St87}. (Here ``proper'' means that the subcomplex in question is not the void complex.) They find more a restrictive characterization in \cite[Theorem 1.3]{CKS} for Cohen--Macaulay relative complexes on ground set $[n]$. Theorem~\ref{thm::CM} is a result in the same vein, with the further constraint that the total complex be Cohen--Macaulay.

\section{Shelling extenders and Simon's conjecture}\label{sec_shell}

A relative complex $(\Gamma,\Delta)$ is \defword{shellable} if its facets can be ordered $F_1,\dots,F_k$ such that $\ideal{F_{i+1}}\setminus\ideal{F_1,\dots,F_i,\Delta}$ has a unique minimal face for all $i\in [k-1]$. Such an ordering of the facets is a \defword{shelling order}. If a pure relative complex is shellable, then it is relative Cohen--Macaulay \cite[Page 118]{Green_Stanley}. Therefore, in our search for a similar notion of an extender for shellability,  we limit our search  to complexes $\Delta$ such that $\depth\Bbbk[\Delta] \geq \dim \Bbbk[\Delta]-1.$

\begin{definition}
Let $\Delta$ be a pure $d$-dimensional simplicial complex. A  $d$-dimensional complex $\Gamma$ is a \defword{shelling extender} for $\Delta$ if
\begin{itemize}
\item $\Delta \subseteq \Gamma$.
\item $\Gamma$ is shellable.
\item The relative complex $(\Gamma, \Delta)$ is shellable.
\end{itemize}
\end{definition}

\begin{conjecture}\label{conj:shell}
If $\Delta$ is a simplicial complex such that $\depth \Bbbk[\Delta] \ge \dim \Bbbk[\Delta] - 1$ for all fields $\Bbbk$, then $\Delta$ has a shelling extender.
\end{conjecture}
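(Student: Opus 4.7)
The plan is to mimic the strategy from the Cohen--Macaulay case by taking $\Gamma$ to be some natural shellable complex containing $\Delta$, and then producing a shelling of the relative complex $(\Gamma,\Delta)$. The natural candidate is $\Gamma = \Delta^{(d)}_n$, the $d$-skeleton of the $n$-simplex (where $\Delta$ has $n+1$ vertices), which is shellable in many explicit ways, for instance in lexicographic order on its facets.

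I would first dispatch the case in which $\Delta$ is itself pure and shellable. Any shelling $F_1,\ldots,F_j$ of $\Delta$ is in particular a partial shelling of $\Gamma$, since the shelling conditions depend only on the $F_i$. Granting Simon's conjecture for uniform matroids, this partial shelling extends to a full shelling $F_1,\ldots,F_j,G_1,\ldots,G_m$ of $\Gamma$. Because $\ideal{F_1,\ldots,F_j,G_1,\ldots,G_i} = \ideal{\Delta,G_1,\ldots,G_i}$ at every step, the shelling condition for $G_{i+1}$ inside $\Gamma$ coincides with the relative shelling condition for $G_{i+1}$ in $(\Gamma,\Delta)$, so the tail $G_1,\ldots,G_m$ is a shelling of $(\Gamma,\Delta)$ and $\Gamma$ is a shelling extender for $\Delta$. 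In other words, for shellable $\Delta$, Conjecture~\ref{conj:shell} reduces to Simon's conjecture.

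For general $\Delta$ satisfying the depth condition but not itself shellable, I would attempt to reduce to the shellable case in one of two ways: either find a pure shellable subcomplex $\Delta' \subseteq \Delta$ of the same dimension for which $(\Delta,\Delta')$ is shellable and then enlarge carefully around $\Delta'$; or, in the spirit of Section~\ref{sec_constructions}, attach a shellable ``gadget'' to each face of $\Delta$ that obstructs shelling, mimicking the $(d,k)$-partition extender construction. I expect this general step to be the main obstacle. Non-shellable Cohen--Macaulay complexes such as Rudin's or Ziegler's triangulated balls admit no obvious shellable exhaustion from inside, and gadget-attaching is much harder to control here than in the partition setting because the shelling condition on a facet depends on the entire previous order rather than on local Boolean intervals alone. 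Consequently, even granting Simon's conjecture, settling Conjecture~\ref{conj:shell} in full generality will likely require new combinatorial input in the non-shellable case.
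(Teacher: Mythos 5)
The statement you were asked to prove is labeled a \emph{conjecture}, and the paper offers no proof of it; it appears in Section~\ref{sec_shell} as an open problem, so there is no argument of the paper's to compare against. Your proposal is not a proof either, and you acknowledge this candidly at the end. The sound content of your write-up is the observation that when $\Delta$ is itself pure and shellable, taking $\Gamma = \Delta^{(d)}_n$ and appealing to Simon's conjecture (Conjecture~\ref{conj:Simon}) yields a shelling extender: any shelling $F_1,\ldots,F_j$ of $\Delta$ is a partial shelling of $\Gamma$, extendable (granting Simon) to a full shelling $F_1,\ldots,F_j,G_1,\ldots,G_m$, and the tail $G_1,\ldots,G_m$ is a relative shelling of $(\Gamma,\Delta)$ because $\ideal{F_1,\ldots,F_j,G_1,\ldots,G_i}=\ideal{\Delta,G_1,\ldots,G_i}$ at every step. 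This is a clean conditional reduction, complementary to the paper's Remark~\ref{remark:hope}, which runs an implication in the other direction (a positive answer to Question~\ref{question:unlikely} would prove Simon's conjecture).

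The gap, which you identify correctly, is twofold. First, even the shellable case rests on Simon's conjecture, which is itself open. Second, for $\Delta$ that satisfy the depth hypothesis but are not shellable (e.g.\ the non-shellable Cohen--Macaulay balls you mention), you offer no argument at all. Your two sketched fallbacks do not obviously help: a pure full-dimensional shellable subcomplex $\Delta'\subseteq\Delta$ with $(\Delta,\Delta')$ relatively shellable would already force $\Delta$ to be shellable (concatenate a shelling of $\Delta'$ with a relative shelling of $(\Delta,\Delta')$), so this route cannot reach the non-shellable case; and the gadget-attachment strategy of Section~\ref{sec_constructions} does not transfer, because relative shellability is a global ordering condition rather than a local interval-decomposition condition, exactly as you note. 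Your write-up should therefore be read as a correct partial reduction plus an honest statement of where the difficulty lies, not as a proof of Conjecture~\ref{conj:shell}.
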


Such shellable extenders may have application to a conjecture of Simon.
We first recall that a pure complex $\Delta$ is \emph{extendably shellable} if every partial shelling order $F_1,\dots, F_j$ can be extended to a shelling order $F_1,\dots, F_j,F_{j+1},\dots,F_k$ of $\Delta$.

\begin{conjecture}{\rm \cite[Conjecture 4.2.1]{Simon_Cleanness}}\label{conj:Simon}
If $\Delta$ is the $d$-skeleton of an $n$-simplex, then $\Delta$ is extendably shellable.
\end{conjecture}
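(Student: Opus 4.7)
The plan is to attempt this conjecture by double induction: the outer induction is on the dimension $d$, and the inner on $n - d$. Two base cases are easy. When $d = 0$, every linear ordering of the vertices is a shelling of $\Delta^{(0)}_n$, so any partial shelling trivially extends. At the other extreme, when $n = d + 1$, the complex $\Delta^{(d)}_{d+1}$ is the boundary of a $(d+1)$-simplex; I would verify extendable shellability here directly, using the fact that removing a partial shelling leaves a shellable ball, so any facet of that residual ball whose intersection with the prior facets is a pure $(d-1)$-ball gives a legal next step, and such a facet can always be found by peeling from the boundary of the residual ball.

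For the inductive step, let $F_1, \ldots, F_j$ be a partial shelling of $\Delta := \Delta^{(d)}_n$. The goal is to exhibit a facet $F_{j+1}$ not already in the list such that $\ideal{F_{j+1}} \cap \ideal{F_1, \ldots, F_j}$ is a pure $(d-1)$-dimensional subcomplex of $\partial F_{j+1}$ generated by an initial segment of some shelling of $\partial F_{j+1}$. The natural reductions are the deletion $\Delta \setminus v$, which is again the $d$-skeleton of an $(n-1)$-simplex, and the link $\lk_\Delta(v)$, which is the $(d-1)$-skeleton of an $(n-1)$-simplex; both are extendably shellable by the inductive hypothesis. The strategy I would pursue is to choose a vertex $v$ that is extremal with respect to the partial shelling (say, one appearing in the fewest $F_i$, or whose open star is already saturated by the partial shelling), separate the partial shelling into the facets that contain $v$ and those that do not, extend each part using the inductive hypothesis applied to $\lk_\Delta(v)$ and $\Delta\setminus v$ respectively, and splice the two extensions together.

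The main obstacle — and the reason this conjecture has stood open since its formulation — lies precisely in the splicing step: the extensions of the link and of the deletion need not be mutually compatible, because the interface where star and antistar meet constrains both sides simultaneously. Any successful proof therefore seems to require features specific to uniform matroids beyond the fact that links and deletions are again uniform. I would exploit the $S_{n+1}$-symmetry to assume the partial shelling has a restricted canonical form, and then try to prove the stronger statement that a greedy reverse-lexicographic extension rule never gets stuck. Failing that, I would look for a purely local exchange argument: at any partial shelling that is not yet complete, perform a small perturbation of the most recently added facets to produce an eligible $F_{j+1}$, using the characterization of shelling steps in terms of restriction faces. Constructing such an exchange in the face of arbitrarily adversarial initial segments is, I expect, the fundamental difficulty, and it is the step for which I would not be confident of success without significant new ideas.
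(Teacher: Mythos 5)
This is a conjecture in the paper, quoted from Simon, and the paper does \emph{not} prove it; Section~\ref{sec_shell} only surveys known partial results (the cases $d\le 2$, via Bj\"orner and Eriksson's extendable shellability of rank-$\le 3$ matroid complexes, and $d\ge n-2$, by Bigdeli--Yazdan Pour--Zaare-Nahandi and independently Dochtermann) and records the link to shelling extenders via Question~\ref{question:unlikely} and Remark~\ref{remark:hope}. There is therefore no proof in the paper to compare your proposal against, and your proposal must stand or fall on its own.

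It does not yet stand, and you say so yourself. The link--deletion double induction you outline is the standard route to \emph{shellability} of matroid complexes, but extendable shellability is much more rigid: an adversarial partial shelling can already interleave facets through a vertex $v$ with facets avoiding $v$ in a way that leaves no consistent ``seam'' when you try to merge an extension of the link-shelling with an extension of the deletion-shelling, and neither the $S_{n+1}$-symmetry nor a greedy reverse-lexicographic extension rule is currently known to tame this. (That the analogous statement for \emph{all} matroid complexes is hopeless is already suggested by Hall's non-extendably-shellable crosspolytope boundaries, cited in the paper.) The known low-rank and low-corank cases are proved by arguments special to those regimes rather than by a general splicing. So the gap you flag at the splicing step is a real and unresolved obstruction; nothing in the paper fills it, and Conjecture~\ref{conj:Simon} remains open.
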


Some partial results about extendable shellability are known. Simon's conjecture is known to be true in certain cases. For $d\leq 1$ and $d\geq n-1$, the conjecture is clearly true. The case $d = n-2$ was proved by Bigdeli, Yazdan Pour, and Zaare-Nahandi in \cite{Extendable_Shellability_Clutters_3} and by Dochtermann in \cite{Extendable_Shellability_Clutters} (and was strengthened by Culbertson, Dochtermann, Guralnik and Stiller in \cite{Extendable_Shellability_Clutters_2}). 

The case $d=2$ was shown by Bj\"{o}rner and Eriksson in \cite{Extendable_Shellability_Matroids} as a consequence of the fact that matroid complexes of rank $\leq 3$ are extendably shellable, since the $d$-skeleton of the $n$-simplex is the independence complex of the uniform matroid of rank $d+1$ over $n+1$ elements. On the other hand, in \cite[Theorem 2.3.1]{Hall_Counterexamples_Discrete_Geom} Hall shows that the boundary of the $d$-crosspolytope is not extendably shellable for $d\geq 12$. In \cite{Bruno_Counterexample}, Benedetti and Bolognini found a counterexample to a strengthening of Simon's conjecture that had been posed by Bigdeli and Faridi \cite{Chordality_d-collapsibility}, Dochtermann \cite{Extendable_Shellability_Clutters}, and Nikseresht \cite{Chordality_of_clutters}.

We note the connection between Conjecture~\ref{conj:shell} and Simon's conjecture.

\begin{question}\label{question:unlikely}
If a shelling extender exists for $\Delta$, then is it possible to create a shelling extender $\Gamma$ without introducing any new vertices?
\end{question}

\begin{remark}\label{remark:hope}
If Question~\ref{question:unlikely} has a positive answer, then this would prove Conjecture~\ref{conj:Simon}.
\end{remark}

Theorem~\ref{thm::CM} shows that the $d$-skeleton of the $n$-simplex is a Cohen--Macaulay extender for $\Delta$ whenever such an extender exists. Thus it is reasonable to ask whether this construction is possible in the case of shelling extenders. We note that the $h$-vector characterizations of shellable relative complexes is the same as in the Cohen--Macaulay case \cite{CKS}, so there is no direct numerical obstruction to this construction.

\section{Questions and Future Directions}\label{sec_closing}

One may ask how close a given complex $\Delta$ is to being partitionable by considering the ``smallest'' possible partition extender $\Gamma$. Our construction produces partition extenders that are quite large, but it is often possible to find smaller extenders by hand. The bow-tie pictured below is a standard example of a non-partitionable complex, with a negative entry in the \(h\)-vector.
\begin{example}\label{ex:MinimalBowtie}
Below, the dark complex is the bow-tie with \(f\)-vector equal to \((1,5,6,2)\) and \(h\)-vector equal to \((1,2,-1,0)\). The entire complex pictured has \(f\)-vector \((1,5,7,3)\) and \(h\)-vector \((1,2,0,0)\). The lighter shaded relative complex has \(f\)-vector \((0,0,1,1)\) and \(h\)-vector \((0,0,1,0)\). Both the larger complex and relative complex are partitionable,  and the $h$-vector of the bow-tie is given by the difference of the two other $h$-vectors.

\begin{center}
\begin{tikzpicture}
\node[coordinate] at (-2,-1) (e1){};
\node[coordinate] at (-2,1) (e2){};
\node[coordinate] at (0,0) (e3){};
\node[coordinate] at (2,1) (e4){};
\node[coordinate] at (2,-1) (e5){};
\draw[fill=black!50] (e1) -- (e2) -- (e3) -- (e1);
\draw[fill=black!50] (e3) -- (e4) -- (e5) -- (e3);
\draw[fill=skyblue!20] (e2) -- (e3) -- (e4) -- (e2);

\draw[color=skyblue!40, thick] (e2) -- (e4);
\draw[thick] (e1) -- (e2) -- (e3) -- (e4) -- (e5) -- (e3) -- (e1);
\node[circle,color=black,fill=black,inner sep=1pt,minimum size=3pt] at (e1){};
\node[circle,color=black,fill=black,inner sep=1pt,minimum size=3pt] at (e2){};
\node[circle,color=black,fill=black,inner sep=1pt,minimum size=3pt] at (e3){};
\node[circle,color=black,fill=black,inner sep=1pt,minimum size=3pt] at (e4){};
\node[circle,color=black,fill=black,inner sep=1pt,minimum size=3pt] at (e5){};
\end{tikzpicture}
\end{center}

\end{example}

The above example of a partition extender is far smaller than those constructed in the proof of Theorem~\ref{main}. This observation leads naturally to the following questions:
\begin{question} Is it possible to construct a minimal partition extender with respect to the number of faces added? With respect to the size of the $h$-vector of the relative complex?
With respect to some other measure of size? \end{question}

\begin{question} Assuming that a minimal partition extender exists, is it unique? \end{question}

If, for example, $\Delta$ is a complete graph on four vertices together with two additional disjoint edges, then $h(\Delta) = (1,6,1)$ but $\Delta$ is not partitionable. This means that the number and sizes of the negative entries of the $h$-vector of a complex does not capture how many faces need to be added to create a partition extender, since there are non-partitionable complexes whose $h$-vectors are all positive. In fact, a result of Duval, Goeckner, Klivans, and Martin \cite{DG16} shows that that there are even Cohen--Macaulay complexes (which have much stronger conditions on their $h$-vectors than positivity) that are non-partitionable.

\begin{example}
Here we explicitly realize our construction on a pair of edges in black, with the partition extender drawn in a lighter shade. Our construction adds $8$ vertices and $13$ edges, but a minimal partition extender can be created by introducing a single edge to connect the two edges in black.

\begin{center}
\begin{tikzpicture}
\node[coordinate] at (-4,0) (e1){};
\node[coordinate] at (-3,-1) (e2){};
\node[coordinate] at (-3,1) (e3){};
\node[coordinate] at (-2,0) (e4){};
\node[coordinate] at (-1,0) (e5){};
\node[coordinate] at (0,1) (e6){};
\node[coordinate] at (0,-1) (e7){};
\node[coordinate] at (1,0) (e8){};
\node[coordinate] at (2,0) (e9){};
\node[coordinate] at (3,1) (e10){};
\node[coordinate] at (3,-1) (e11){};
\node[coordinate] at (4,0) (e12){};
\draw[thick] (e3) -- (e2);
\draw[thick] (e6) -- (e7);
\draw[thick, color=skyblue!40] (e1) -- (e2) -- (e4) -- (e3) -- (e1);
\draw[thick, color=skyblue!40] (e5) -- (e6) -- (e8) -- (e7) -- (e5);
\draw[thick, color=skyblue!40] (e9) -- (e10) -- (e12) -- (e11) -- (e10);
\draw[thick, color=skyblue!40] (e9) -- (e11);

\node[circle,color=black,fill=black,inner sep=1pt,minimum size=3pt] at (e2){};
\node[circle,color=black,fill=black,inner sep=1pt,minimum size=3pt] at (e3){};
\node[circle,color=black,fill=black,inner sep=1pt,minimum size=3pt] at (e6){};
\node[circle,color=black,fill=black,inner sep=1pt,minimum size=3pt] at (e7){};
\node[circle,color=skyblue,fill=skyblue,inner sep=1pt,minimum size=3pt] at (e1){};
\node[circle,color=skyblue,fill=skyblue,inner sep=1pt,minimum size=3pt] at (e4){};
\node[circle,color=skyblue,fill=skyblue,inner sep=1pt,minimum size=3pt] at (e5){};
\node[circle,color=skyblue,fill=skyblue,inner sep=1pt,minimum size=3pt] at (e8){};
\node[circle,color=skyblue,fill=skyblue,inner sep=1pt,minimum size=3pt] at (e9){};
\node[circle,color=skyblue,fill=skyblue,inner sep=1pt,minimum size=3pt] at (e10){};
\node[circle,color=skyblue,fill=skyblue,inner sep=1pt,minimum size=3pt] at (e11){};
\node[circle,color=skyblue,fill=skyblue,inner sep=1pt,minimum size=3pt] at (e12){};
\end{tikzpicture}
\end{center}

\end{example}

Given a complex $\Delta$, we might ask for an upper bound on how many faces must be added to create a partition extender $\Gamma$ via our construction. If $g(k)$ is the number of faces in a \((d,d-k)\)-partition extender, then $g(k)$ is defined by the recurrence relation
\[
g(k)=k(2^{d+1}-2^k)+ \sum_{j=0}^{k-1} \binom{k}{j} g(j).
\]
Since $g$ is an increasing function, if we ignore the term \(-2^{k}\), we obtain a simple one-term recurrence relation bound
\[
g(k) \leq k2^{d+1} + 2^{k}g(k-1).
\]
As long as \(g(k-1)>2^{d+1}\),
\[
g(k) \leq 2(2^k)g(k-1).
\]
The starting term is \(g(0)=0\), and \(g(1) \leq 2^{d+1}\). Therefore, an upper bound for \(g(k)\) is
\[
g(k) \leq 2^{2^{k}-1+d}.
\]
Thus, given a complex $\Delta$ with $f(\Delta)=(f_{-1},f_0,\ldots,f_{d})$, our construction will add
$$
\sum_{-1\le k \le d} f_{k}\cdot g(d-k) \le \sum_{-1\le k \le d} f_k \cdot 2^{2^{d-k}-1+d}
$$
total faces. This bound is not exact, but we expect it to be of the correct order of magnitude. As seen in Example~\ref{ex:MinimalBowtie}, the number of faces added in a minimal partition extender can be much lower.

In Section~\ref{sec_nonpure_partition} we constructed nonpure partition extenders. Along the same lines, given some condition on the depths of the pure skeletons of a nonpure complex $\Delta$, we expect that it should be possible to construct a \emph{sequentially Cohen--Macaulay extender} $\Gamma$, that is, a $\Gamma \supseteq \Delta$ such that $d_{\Delta}(\sigma) = d_{\Gamma}(\sigma)$ for all $\sigma \in \Delta$, and $\Gamma$ and $(\Gamma,\Delta)$ are both sequentially Cohen--Macaulay.

\section*{Acknowledgments}

We thank the referees for their helpful suggestions, especially for those regarding the subtleties of nonpure partitionability. We also thank Margaret Bayer for her careful reading of earlier drafts. B.~Goeckner was partially supported by an AMS-Simons travel grant.

\bibliographystyle{amsplain}
\bibliography{Extenders}
\end{document}